\newtheorem{theorem}{Theorem}[section]
\newtheorem{remark}[theorem]{Remark}
\newtheorem{ex}[theorem]{Example}
\newcommand{\R}{\mathbb R}
\newcommand{\PP}{\mathbb P}
\renewcommand{\SS}{\mathcal{S}}
\newcommand{\EE}{\mathcal{E}}
\newcommand{\diff}[1]{{\mathrm{d}{#1}}}
\newcommand{\ResKs}{{{\Phi}^K_\sigma}}
\newcommand{\ResK}{{{\Phi}^K}}
\newcommand{\bbf}{{\mathbf {f}}}
\newcommand{\bxx}{{\mathbf {x}}}
\newcommand{\bn}{{\mathbf {n}}}
\newcommand{\dpar}[2]{\dfrac{\partial #1}{\partial #2}}
\newcommand{\bF}{\mathbf{F}}
\newcommand{\bFF}{\mathcal{F}}
\newcommand{\bbF}{\mathbf{\mathcal{F}}}
\newcommand{\bbg}{\mathbf{g}}
\newcommand{\ba}{\mathbf{a}}
\newcommand{\bx}{\mathbf{x}}
\newcommand{\bs}{\boldsymbol}
\renewcommand{\div}{\operatorname{div}}
\newcommand{\est}[1]{\left\langle#1\right\rangle}
\newcommand{\bU}{\mathbf{U}}
\newcommand{\dd}{\mathrm{d}}
\newcommand{\mean}[1]{\overline{#1}}
\newcommand{\LL}{\mathcal{L}}
\newcommand{\e}[1]{\mathrm{e^{#1}}}
\newcommand{\VV}{\mathcal{V}}
\newcommand{\gnum}{g^{\operatorname{num}}}
\renewcommand{\vec}[1]{\underline{#1}}
\NewDocumentCommand{\mat}{mo}{%
  \IfValueTF{#2}{%
    \underline{\underline{#1}}{#2}
  }{%
    \underline{\underline{#1}}\,
  }%
}
\definecolor{darkspringgreen}{rgb}{0., 0.55, 0.3}
\definecolor{dartmouthgreen}{rgb}{0.05, 0.5, 0.06}
\definecolor{etonblue}{rgb}{0.59, 0.78, 0.64}
\definecolor{airforceblue}{rgb}{0., 0.4, 0.66}
\definecolor{arylideyellow}{rgb}{0.91, 0.84, 0.42}
\definecolor{emerald}{rgb}{0.31, 0.78, 0.47}
\definecolor{uclagold}{rgb}{1.0, 0.7, 0.0}
\definecolor{cadmiumorange}{rgb}{0.93, 0.53, 0.18}
\begin{document}
\title{{Analysis of the SBP-SAT Stabilization for Finite Element Methods Part II: Entropy Stability}}

\author[$\dagger$]{R. Abgrall}
\author[$\star$]{J. Nordstr\"om}
\author[$\dagger$]{P. \"Offner\thanks{Corresponding author: P. \"Offner, philipp.oeffner@math.uzh.ch}}
\author[$\ddag$]{S. Tokareva}
\affil[$\dagger$]{Institute of Mathematics,
University of Zurich, Switzerland}
\affil[$\star$]{Department of Mathematics, Computational Mathematics, Link\"oping University, Sweden}
\affil[$\ddag$]{{Applied Mathematics and Plasma Physics Group, Los Alamos National Laboratory, USA}}

\date{%5October 15th, 2017
\today}
\maketitle

\begin{abstract}

In the research community, there exists the strong 
belief that a continuous Galerkin scheme is 
notoriously unstable and additional stabilization 
terms have to be added to guarantee stability.
In the first part of the series \cite{abgrall2019analysis},
the application of simultaneous
approximation terms for linear problems is investigated where 
the boundary conditions are imposed weakly.
By applying this technique, the authors
demonstrate that a pure continuous Galerkin scheme is indeed linear stable 
if the boundary conditions are done in the correct way. 
In this work, we extend this investigation to the non-linear
case and focusing on entropy conservation. 
Switching to entropy variables, we will provide 
an estimation on the boundary operators also for non-linear problems to 
guarantee conservation. 
In numerical simulations, we verify our theoretical analysis.

\end{abstract}
%\tableofcontents
\section{Introduction}\label{sec:Introduction}

In the first paper \cite{abgrall2019analysis}, the authors demonstrated that a pure Galerkin scheme is
indeed linear stable if the boundary procedure is done in the correct way. 
Here, the usage of simultaneous  approximation terms (SATs)  and imposing the boundary conditions 
weakly are essential for the investigation. \\
In this work, we extend this study to the non-linear case and focusing on entropy stability. 
In \cite{abgrall2018connection}, the author presented a way to build entropy conservative schemes
by adding correction term to it. The term  works at every degree of freedoms (dofs) and 
compensates   the entropy production/destruction at the dofs but has no influence on the accuracy 
and the performance of the scheme. \\
To build now entropy stable Galerkin schemes, we combine both ideas together. 
We add the correction term to the scheme in the non-linear case and further 
develop  a \textbf{new} SAT boundary procedure 
to guarantee, in total, entropy stability.\\
The proposed boundary operators coincide with the classical ones  used the linear case 
\cite{abgrall2019analysis, fernandez2014review, svard2014review, gassner2013skew, offner2019error})
but further extend the SAT approach to non-linear problems. 
Our boundary approach is not restricted  to the continuous Galerkin method but can also
be applied using other schemes (e.g. discontinuous Galerkin\cite{gassner2013skew}, flux reconstruction \cite{ranocha2016summation} or finite differences \cite{fernandez2014review} ).
Therefore, the paper is organized as follows:\\
We explain the residual distribution scheme and the connection to the continuous 
Galerkin framework. Then, we repeat the concept 
about entropy where we follow Harten's approach \cite{harten1983symmetric}.
Next, we explain how to construct entropy conservative/stable schemes
using entropy correction terms developed in \cite{abgrall2018general}
and further extended and applied in \cite{abgrall2018connection, ranocha2019reinterpretation}.
These terms will be essential in our further studies. 
In the next section \ref{sec:linear}, we shortly summarize the  main results about linear stability of the 
pure Galerkin scheme from \cite{abgrall2019analysis} and extend in the following section \ref{sec:non-linear}
our investigation to the non-linear case. We give a recipe how to construct the boundary operators 
to guarantee entropy stability for the Galerkin scheme. 
In numerical simulations, we validate our analysis. Furthermore, we demonstrate also that 
the correction term can even rescue schemes which are not linear stable by construction.
At the end, we summarize our results and give a short outlook.
% In the appendix, we mentioned several other approaches 
% to build the boundary operators. Here,  certain conditions on the flux functions (e.g. symmetry) have to hold. 

\section{Residual Distribution Schemes}

In this section, we shortly introduce/repeat the residual distribution (RD) schemes as they  are also well-known in literature,
see \cite{deconinck2000status, abgrall2006residual, abgrall2017high2, abgrall2018general} and references there in.  
RD provides a unifying framework including some of the up-to-date high order schemes 
like continuous and discontinuous Galerkin methods  and flux reconstruction schemes  
\cite{abgrall2018general, abgrall2018connection}. The selection of approximation /solution space and the definition of the residuals 
specifies the scheme completely  and thus the properties of the considered methods.
 In this paper, our focus lies only on the continuous Galerkin scheme. However, we start by repeating the
 general approach here also to include/present the entropy correction terms as they are originally developed in \cite{abgrall2018general},
 extended and applied in \cite{abgrall2018connection, ranocha2019reinterpretation}. 
 
\subsection{Residual Distribution - Notation and Basic Formulation} \label{subsection:RD}
We are interested in the numerical approximation of a hyperbolic problem 
\begin{equation}\label{eq:conservation_law_general}
 \frac{\partial U}{\partial t}+\div f(U)=0
\end{equation}
with suited initial and boundary conditions. 
Later, we will focus on the boundary condition more precisely, 
but for the explanation of RD this is not important.

RD will be used for the discretisation in space. For sake of simplicity, we explain the RD approach for the steady state problem of \eqref{eq:conservation_law_general} and in the case of a globally continuous approximation.
It is 
\begin{subequations}\label{eq:steady_state}
\begin{equation}\label{eq:steady_state:div}
 \div f(U)=0.
\end{equation}
with a Dirichlet condition on the inflow part of the boundary, to fix ideas:
\begin{equation}\label{eq:steady_state:bc}
u(x)=g(x) \text{ for }x\in \partial \Omega^-=\{ y\in \partial \Omega, \nabla_U f(U(y))\cdot \mathbf{n}<0\}
\end{equation}
\end{subequations}
The domain $\Omega$ is split into subdomains $\Omega_h$ (e.g triangles/quads in two dimensions, tetrahedrons/hex in 3D). 
We denote by $K$ the generic element of the mesh and by $h$ the characteristic mesh size. 
For the boundary elements, we denote them  $\Gamma$. Then, the degrees of freedom $\sigma$ (DoFs) are defined 
in 
each $K$:   we have  a set of linear forms acting on the set
$\PP^k$ of polynomials of degree $k$ such that the linear mapping 
$q\in \PP^k\longmapsto (\sigma_1(q),\cdots, \sigma_{|\sum_K|}(q))$ is one-to-one. The set 
$\SS$ denote the set of degrees of freedom in all elements. 

The solution $U$ will be approximated by some element from the space $\VV^h$
defined by 
\begin{equation}\label{eq:solution_space}
\VV^h:=\bigoplus_{K} \left\{ U^h \in \LL^2(K), U^h|_K \in \PP^k   \right\}.
\end{equation}
A linear combination of basis functions $\varphi_\sigma\in \VV^h$  will be used to describe the numerical solution
\begin{equation}\label{eq:solution_approx}
U^h( x)=\sum_{K\in \Omega_h }\sum_{\sigma \in K} U_{\sigma}^h  \varphi_{\sigma}|_K( x), \quad \forall{ x \in \Omega}
\end{equation}
where the coefficients $U_{\sigma}^h$ must be found by a numerical method. Therefore, the residuals 
comes finally into play. Before we focus now on the RD scheme and the definition, we want to make two small remarks.
\begin{remark}$ $
\begin{enumerate}
\item   Two cases for the  solution space \eqref{eq:solution_space} are
normally considered $\VV= \VV^h\cap C^0(\Omega)$ and $\VV= \VV^h$. 
For the continuity requirement, we need an additional condition on the splitting of the domain $\Omega$
e.g. a conformal triangulation while in the second case the conformity can be dropped, see \cite{abgrall2018general, abgrall2018connection}. In this paper, we will make use of $\VV= \VV^h\cap C^0(\Omega)$.
\item Furthermore, as basis functions we are working either with Lagrange interpolation 
where the degrees of freedom are associated to points in $K$  or   B\'{e}zier polynomials. Both are basis of $\PP^k$ but the the B\'ezier polynomials are a viable option for the unsteady case, 
see \cite{abgrall2017high}.
 We note that for any $K$ 
the following condition will be  fulfilled
\begin{equation*}
\forall x \in K, \qquad \sum_{\sigma \in K} \varphi_\sigma (x)=1. 
\end{equation*}

\end{enumerate}
\end{remark}
 \begin{figure}[!htp]
\centering
  \begin{subfigure}[b]{0.35\textwidth}
    \includegraphics[width=\textwidth]{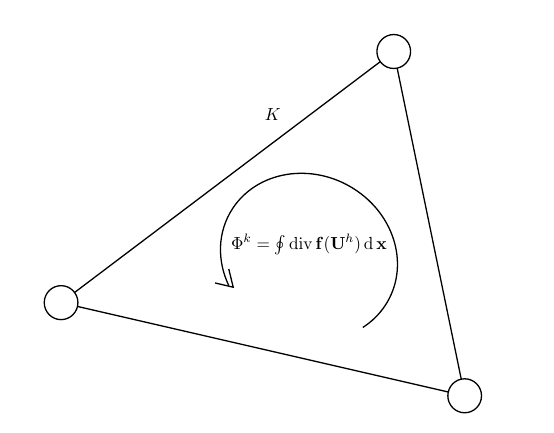}
    \caption{Step 1: Compute the total residual}
  \end{subfigure}%
  ~
  \begin{subfigure}[b]{0.35\textwidth}
    \includegraphics[width=\textwidth]{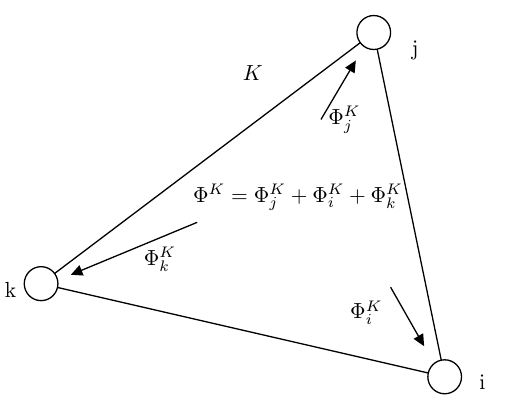}
    \caption{Step 2: Split the total residual}
  \end{subfigure}%
    ~
  \begin{subfigure}[b]{0.3\textwidth}
    \includegraphics[width=\textwidth]{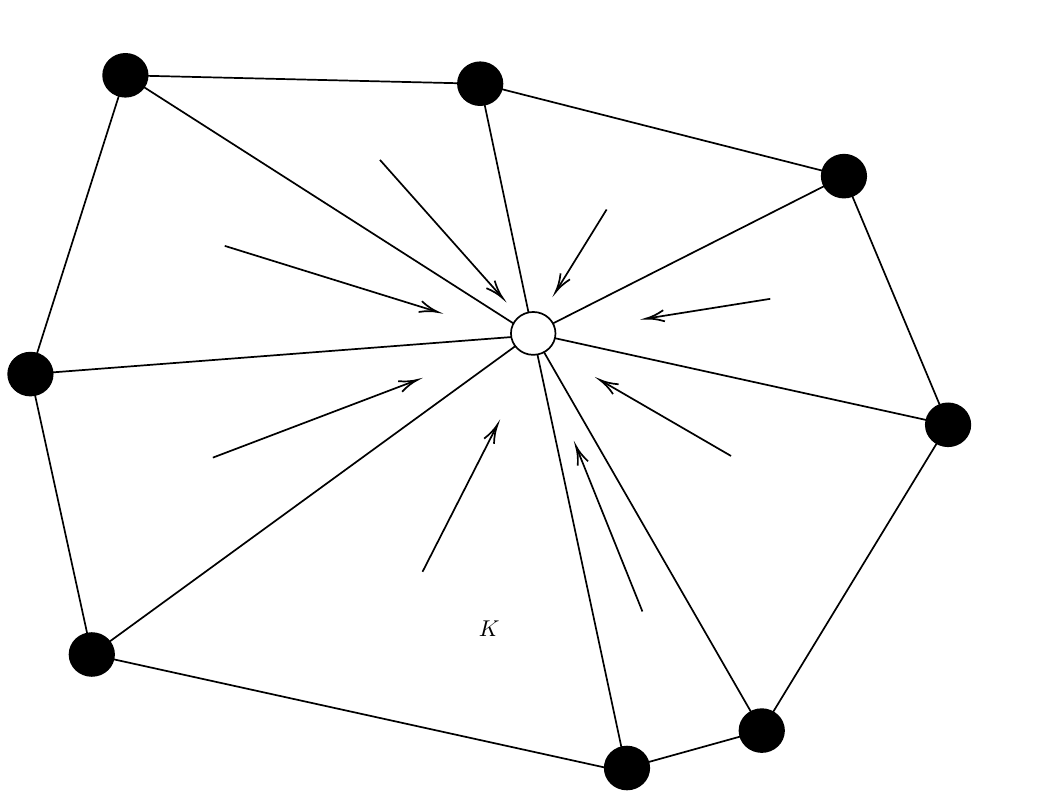}
    \caption{Step 3: Combine the residuals}
  \end{subfigure}%
  \caption{Illustration of the three steps(total residual, nodal residuals, gathering residuals) of the RD approach for linear triangular elements.}
  \label{RD_steps}
\end{figure}

We specify  now the RD scheme by the following three steps:

\begin{enumerate}
\item Define for any $K$ the total residual\footnote{This term is also referred  as fluctuation term in literature \cite{abgrall2003high}.} of \eqref{eq:steady_state}
\begin{equation}\label{eq:fluctuation_term}
\Phi^K=\oint_K \hat{f}_{\mathbf{n}}(U^h, (U^h)^-)\; \diff \gamma %$\Phi^K=\oint_K \operatorname{div} {\mathbf {f}}({\mathbf{U}}^h) \operatorname{d} x$
\end{equation}
where  $\oint$ denotes  a quadrature formula to calculate the integrals, 
$\hat{f}_{\mathbf{n}}$ is a consistent numerical flux in the outward direction $\mathbf{n}$, and $(U^h)^-$ is the generic state on the opposite faces of $K$. In the case of a globally continuous approximation, $\hat{f}_{\mathbf{n}}(U^h, (U^h)^-)=f(U^h)\cdot \mathbf{n}$.
\item Split the total residual into sub-residual $\ResKs$ for each degree of freedom $\sigma \in K$,
so that the sum of all the contributions over an element $K$ is the fluctuation term itself, i.e.
\begin{equation*}
\ResK=\sum_{\sigma \in K} \ResKs, \qquad \forall K \in \Omega_h
\end{equation*}
\item For a face $\Gamma$ on $\partial \Omega$, we also define boundary residuals $\Phi^{\Gamma}_{\sigma}$  which satisfies a 
conservation relation similar to the previous one, namely
\begin{equation*}
\sum\limits_{\sigma\in \Gamma} \Phi^{\Gamma}_{\sigma}=\oint_{\Gamma} \big ( f(U^h)\cdot \mathbf{n}-\hat{f}_\mathbf{n} (U^h, g)\bigg ) d\gamma
\end{equation*} 
\item The resulting scheme if finally obtained by summing all sub-residuals of one degree of freedom from different elements $K$. It is 
\begin{equation}\label{eq:total_scheme}
\sum_{K| \sigma \in K} \ResKs=0  \qquad \forall \sigma \in \Omega_h.
\end{equation}
\eqref{eq:total_scheme} allows us to calculate the coefficients $ U_{\sigma}^h$ in our numerical
 approximation \eqref{eq:solution_approx}.  
 
 In case of $\sigma \in \Gamma$, \eqref{eq:solution_approx} will be split for any (DoF) and we can finally 
write the discretization of \eqref{eq:steady_state}. For any $\sigma \in \SS$, it is
\begin{equation}\label{eq:scheme}
\sum_{K\in \Omega| \sigma \in K}\ResKs(U^h)+
\sum_{\Gamma \in \partial \Omega| \sigma \in \Gamma} \Phi^{\Gamma}_{\sigma}(U^h)=0.
\end{equation}
%where $\Phi^{\Gamma}_{\sigma}$ denotes the boundary residual which satisfies a conservation relation
\end{enumerate}
The RD scheme is described by \eqref{eq:scheme} where the second sum is empty when $\sigma\not\in \partial\Omega$, in which case we get \eqref{eq:total_scheme}.
In the above mentioned literature it is shown that some of the nowadays most favourable schemes 
either based on finite volume  or finite element approaches  (e.g. SUPG, DG, FR, FV-WENO, etc.)
can be recast with this framework \eqref{eq:scheme}. 

With \eqref{eq:scheme} we obtain the RD scheme, what is left over is the exact definition of the residuals $\ResKs$ which will finally specifies the method and the framework we are working in,
the continuous Galerkin scheme. 
Therefore, we chose for the solution space $\VV= \VV^h\cap C^0(\Omega)$ and the residuals $\ResKs$ 
are defined by 
\begin{equation}\label{residual_galerkin}
\begin{aligned}
\ResKs(U^h):=&\oint_{K} \phi_\sigma \div f (U^h)\; \diff x\\
			 =& \oint_{\partial K} \phi_\sigma f (U^h)\cdot \bn\; \diff \gamma 
			 -\oint_K \nabla \phi_\sigma \cdot f(U^h) \;  \diff  x,
\end{aligned}
\end{equation}
 where we used Gauss theorem in the second line. % Using \eqref{residual_galerkin} we are working with the continuous Galerkin scheme.
 The whole spatial discretisation is now determined. The extension to the unsteady case can be done in a similar way as it is described 
 \textit{inter alia} in \cite{ricchiuto2010explicit,abgrall2017high2}.
 As it is well known that any finite element technique used to semidiscretize \eqref{eq:conservation_law_general} will yield to a formulation 
 \begin{equation}\label{eq:finite_semidiscrete}
  \mat{M}\frac{\partial}{\partial t}\vec{U}^h +\mat{\bF}=0
 \end{equation}
 where $\vec{U}^h$ denotes the vector of degrees of freedom, $\mat{\bF}$ 
 is the approximation of $\div f$ (here: in RD formulation) and 
 $\mat{M}$ is a mass matrix\footnote{In the finite difference community $\mat{M}$ is called norm matrix and is classically abbreviated with $P$, c.f. 
 \cite{svard2014review, nordstrom2006conservative}.}. In case of continuous elements, this matrix is sparse but is not block diagonal, contrarily to the discontinuous
 Galerkin methods. 
 It is well-known that the continuous Galerkin scheme suffers from his stability issues. Therefore, it is common to add  stabilization 
 terms to the scheme as the are introduced for example in \cite{burman2004edge} and which are applied in 
 the RD framework already in \cite{abgrall2006residual, abgrall2017high2, abgrall2018general}. However, we follow a different approach in this paper
 and will renounce these classical stabilisation techniques. 
 In order to do this,  we still need some results known from the literature,
 which we will briefly repeat here.
\subsection{Entropy framework}\label{subsec:entropy}

Since, especially nonlinear conservation laws may have infinity weak solution, one has to 
find additional criterion to select the  \emph{physically relevant} solution among all the weak solutions. 
This criterion is based on the concept of entropy, see \cite{godlewski1991hyperbolic, lax1971shock}. 
In the maravous work \cite{harten1983symmetric}, Harten described the symmetrizability of systems of conservation laws 
which possses an entropy function and repeated also some well-known conditions and
properties of entropy functions.\\
A scalar function  $\eta(U)$ is an entropy function for \eqref{eq:conservation_law_general}
if
\begin{enumerate}
 \item The function $\eta$ satisfies 
\begin{equation}\label{eq:entropy_flux}
\eta'(U)\cdot \nabla_{U}\bbf_j(U)=\nabla_{U} g_j(U),\quad  j=1,\dots d.
\end{equation}
where $g_j$ is some scalar function called entropy flux in the $x_j$ direction.
\item The function $\eta$ is a convex function of $U$. 
\end{enumerate}
$(\eta(U), \bbg(U))$ is the entropy pair with entropy flux $\bbg=(g_1,\dots,g_d)$
and a weak solution of \eqref{eq:conservation_law_general} is called entropy solution if $U$
satisfies additionally 
\begin{equation}\label{eq:entropy_solution}
  \frac{\partial \eta(U)}{\partial t}+\div \bbg(U)\leq0
\end{equation}
for all entropies $\eta$ of \eqref{eq:conservation_law_general}. \\
In the following, we assume to know a entropy function 
of \eqref{eq:conservation_law_general} and 
$V$ is called entropy variable. 
It is given by $V^T:=\eta'(U)=\nabla_{U} \eta(U)$.
Because of the convexity of $\eta$, the mapping 
$U\to V$ is one-to-one
and instead of working with $U$ in \eqref{eq:conservation_law_general} we can work directly with $V$ by setting
$U=U(V)$. Then, by introducing the new variable $V$ in place of $U$ a symmetrization of  \eqref{eq:conservation_law_general} will be accomplished, see  
\cite{harten1983symmetric, mock1980systems} for details.
In abuse of notation we won't introduce a new notation for this change of the variable, but 
we can also rewrite 
\eqref{eq:conservation_law_general} and \eqref{eq:entropy_flux} in terms of the entropy variable.
Also by following \cite{harten1983symmetric, tadmor1987numerical} we know 
the following relations are fulfilled  
where we write everything in terms of the entropy variable $V$
\begin{align}
 \eta'(U)\cdot \nabla_{V}\bbf_j(V)=&\nabla_{V} g_j(V) \Longleftrightarrow 
 V^T\cdot \nabla_{V}\bbf_j(V)=\nabla_{V} g_j (V)   \tag{\ref{eq:entropy_flux}'} \\ 
\bbg(V)=& \est{V,\bbf(V)}-\Theta(V) \label{eq:flux_potential} \\
\nabla_{V} \Theta_j(V)&=\bbf^T_j(V) \label{eq:flux_condition}
%\nabla_{V}q(V)&=U^T \label{eq:q_U_condition}  
\end{align}
where $\Theta=(\Theta_1,\dots,\Theta_d)$ is the flux potential.
For an better understanding, we shortly repeat the main steps of the relations 
\eqref{eq:entropy_flux}-\eqref{eq:flux_condition} from \cite{harten1983symmetric}.
% First of all, due to the convexity of $q$ the mapping $V\to \nabla_{V} q$ is one-to-one, so one
% can invert \eqref{eq:q_U_condition} and $V$ can be regarded as a function of $U$. \\
\eqref{eq:entropy_flux} will be derived by using \eqref{eq:flux_potential} 
and \eqref{eq:flux_condition}
\begin{equation*}
 \nabla_{V} g_j(V)=V^T\nabla_{V} \bbf_j(V)+ \bbf_j^T \nabla_{V} V -
 \nabla_{V} \theta_j(V)\nabla_{V} V \stackrel{\eqref{eq:flux_condition}}{=}V^T\nabla_{V} 
 \bbf_j(V).
\end{equation*}
Finally, the concept of entropy has been introduced the consideration
of numerical schemes
\cite{ fisher2013high} and one tries to find/ construct schemes which fulfill a discrete counterpart 
of \eqref{eq:entropy_solution}
which can be written in term of the RD framework 
\begin{equation}\label{ineq:entropy_condition}
\sum_{\sigma\in K }\est{V_\sigma,\ResKs}\geq \oint_{\partial K} \gnum\left(V_{|K}^h, V_{|K^-}^h \right) \bn\diff s
\end{equation}
 where \eqref{eq:entropy_condition} $\gnum$ represents a consistent numerical entropy flux \cite{abgrall2018general}.
One speaks about entropy conservative schemes if the equality in \eqref{ineq:entropy_condition} holds. 
In case of an inequality, the scheme is called entropy stable. 

\subsection{Entropy Correction Terms}\label{subsec:entropy}

In \cite{abgrall2018general}, the author present an ansatz to build entropy conservative / stable schemes in a general framework.
A correction term is added to the scheme at every degree of freedom $\sigma \in K$ and will ensure 
that the schemes fulfill the discrete entropy condition, but simultaneously has no effect on the conservation relation.
In \cite{abgrall2018connection, ranocha2019reinterpretation}, a re-interpretation and applications of these terms can be found.
However, a pure continuous Galerkin scheme is not entropy stable at all. Therefore, the correction term is added 
to the scheme and we shorty repeat the main idea of his procedure from \cite{abgrall2018general}.
\\
 Therefore, 
$V^h\in \VV$ represents the approximation of the entropy variable $V=\nabla_{U}\eta(U)$ in $\VV$.
As we already mentioned the mapping $U \to V(U)$ is one-to-one and we work directly with $V$. 
We are adding the correction term $\bs r_\sigma$ to our residual $\ResK$ at every degree of freedom.
It is 
\begin{equation*}
\hat{\ResKs} = \ResKs+r_\sigma
\end{equation*}
where the discrete entropy condition 
\begin{equation}\label{eq:entropy_condition}
\sum_{\sigma\in K }\est{V_\sigma, \hat{\ResKs}}= \oint_{\partial K} \gnum\left(V_{|K}^h, V_{|K^-}^h \right) \bn\diff s
\end{equation}
holds. 
In \cite{abgrall2018general}, the following correction term is introduced
\begin{equation}\label{eq:correction}
 r_\sigma =\alpha(V_\sigma -\mean{V} ) \text{ with } \mean{V} = \frac{1}{\#K}  \sum_{\sigma \in K } V_\sigma 
\end{equation}
with $\#K$ is the number of degrees of freedom in $K$, 
\begin{equation*}
\alpha= \frac{\EE}{\sum_{\sigma \in K } \left(V_\sigma  -\mean{V} \right)^2}  \text{ and } \EE:= \oint_{\partial K} \gnum\left(V_{|K}^h, V_{|K^-}^h \right) - \sum_{\sigma\in K }\est{V_\sigma, \ResKs}.
\end{equation*}
Adding  \eqref{eq:correction}  to our residual (here: continuous Galerkin) $\ResK$ will provide
that our constructed scheme $\hat{\ResKs} $ fulfills the entropy condition \eqref{eq:entropy_condition}
and, by construction of $r_\sigma $, the conservation relation  is guaranteed, since 
 \begin{equation*}
\sum_{\sigma \in K}\bs r_\sigma= \sum_{\sigma \in K}\alpha(V_\sigma -\mean{V} )=0
\end{equation*}
is satisfied. The error behavior of $\EE$ can be controlled through the used numerical quadrature, see \cite{abgrall2018general} for details and later.
\begin{remark}
The correction term can be seen also as the approximation of a second order derivation and a further stabilisation term. However, the term works directly 
on the degrees of freedom and guarantee that  the semidiscrete entropy inequality is fulfilled. 
Through the conservation relation and the application of high order quadrature formulas the accuracy of the scheme is not affected.
Actually, these terms are only compensating the entropy production/destruction of the original scheme in the semidiscrete setting. 
\end{remark}

\section{Linear Stability for a pure Galerkin Scheme}\label{sec:linear}

In \cite{abgrall2019analysis}, linear stability was investigated
for a pure Galerkin scheme using 
simultaneous approximation terms (SATs).
The origin of the SAT approach lies in the finite difference community,
together with summation by parts (SBP) operators they are used to prove 
linear stability \cite{nordstrom2016roadmap, svard2014review}. The main idea is to impose the boundary 
conditions weakly where also  boundary operators (penalty terms)
are involved. The operators are determined to guarantee 
linear stability in the semi-discrete setting.\\
The SBP-SAT technique has already been applied in the 
discontinuous Galerkin (DG) and Flux Reconstruction (FR) 
framework to prove stability results, see inter alia \cite{gassner2013skew, gassner2016well,chen2017entropy,chenreview, ranocha2016summation} and references therein.
However, to the best of our knowledge, we were the first ones 
who applied this technique together with a pure Galerkin scheme in \cite{abgrall2019analysis}.
We were able to prove linear stability and to derive conditions on the boundary operators.
Here, we will repeat shortly the main ideas and results from \cite{abgrall2019analysis}
to further extend them to the non-linear case in the next section.

\subsection{SATs in the Galerkin Framework }
For simplicity reasons, we are considering the following linear advection equation 
 \begin{equation}\label{eq:linear_ad}
 \begin{split}
    u_t+au_x&=0, \quad 0\leq x\leq 1, \quad t>0,\\
    u(x,0)&=u_{in}(x),\\
    u(0,t)&=b_0(t)
 \end{split}
 \end{equation}
where $u_{in}$ and $b_0$ are initial and boundary data in $L^2$ and $a>0$.\\
Instead of having an extra equation for the boundary condition as in \eqref{eq:linear_ad},
the condition is enforced weakly by some term $\Pi (u) \delta_{x=0}$ which is called Simultaneous
Approximation Term. 
One considers   
$$u_t+a u_x= \Pi (u) \delta_{x=0}$$
In a Galerkin FE based discretization the solution is approximated by 
$u^h(x, t)=\sum\limits_{j=0}^N u^h_j(t)\varphi_j(x)$ where $\varphi$ are basis functions and $u^h_j$ are the coefficients.
Let us further assume that $\varphi$ are Lagrange polynomials where the degrees of freedoms are associated to points in the interval.  
Consider the variational formulation of \eqref{eq:linear_ad} with test function $\varphi_i$ and  inserting the approximation 
yields 
\begin{equation}\label{eq:approx}
\begin{split}
\est{ u_t^h(t,x), \varphi_i(x) }+ \est{a\partial_x u_x^h(t,x), \varphi_i (x)}&\stackrel{!}{=} 0, \quad \forall i=0, \cdots, N. \\
\text{ i. e} \qquad
 \int_I \sum_{j=0}^N (\partial_t u^h_j(t)) \varphi_j(x)\varphi(x) \diff x+a \int_I \sum_{j=0}^Nu^h_j(t)(\partial_x \varphi_j(x) ) \varphi_i(x) \diff x&=0
 \\
 \text{ or } \qquad \sum_{j=0}^{N} M_{i,j}(\partial_t u^h_j(t) ) + a\sum_{j=0}^{N} Q_{i,j} u_j^h(t)&=0\\
\end{split}
\end{equation}
where 
\begin{equation}\label{eq:definition_norm_matrix}
M_{i,j}= \int_I  \varphi_j(x)\varphi_i(x) \diff x \quad \text{ and } \quad Q_{i,j}= \int_I (\partial_x \varphi_j(x)) \varphi_i(x) \diff x.
\end{equation}
We 
consider now 
\begin{equation}\label{eq:Q_inte}
\begin{split}
Q_{i,j}+Q_{i,j}^T&= \int_I (\partial_x \varphi_j(x)) \varphi_i(x) \diff x + (\partial_x \varphi_i(x)) \varphi_j(x) \diff x
= \int_I \partial_x (  \varphi_j(x) \varphi_i(x) ) \diff x\\
&= \varphi_i(x)\varphi_j(x)|_{0}^1=\varphi_i(1)\varphi_j(1)-\varphi_i(0)\varphi_j(0) \quad \forall i,j =0, \cdots, N
\end{split}
\end{equation}
If we set  on both element boundaries degrees of freedom,  then we obtain  
\begin{equation*}
\varphi_i(1)\varphi_j(1)-\varphi_i(0)\varphi_j(0)=\begin{cases}
									1 &\text{ for } i=j=N,\\
									-1 &\text{ for } i=j=0,\\
									0 &\text { elsewhere}.
									\end{cases}
\end{equation*}
The same holds if a quadrature rule is applied which is sufficiently exact. In this case, it can be shown 
that by imposing the boundary conditions weakly together with a suitable boundary operators the pure Galerkin 
scheme is energy stable, see \cite[Proposition 3.4]{abgrall2019analysis}.\\
The result about stability can further generalized to universal FE semi-discretisations 
\eqref{eq:finite_semidiscrete}.
For a general linear problem (scalar or systems) the formulation \eqref{eq:finite_semidiscrete}
can be written with penalty terms as 
\begin{equation}\label{eq:FD_advection}
\mat{M}\partial_t \vec{\bf U}^h+\mat{Q_1}\mat{A} \vec{\bU}^h= \mat{\Pi} \left( \vec{\bU}^h \right)
\end{equation}
where $ \Pi $ is the boundary operators and the Theorem is formulated as follows:
\begin{theorem}[\cite{abgrall2019analysis}, Theorem 3.5]\label{general}
Applying a general FE semidiscretization \eqref{eq:FD_advection}  together with the SAT approach 
to a linear equation
and let the mass matrix $\mat{M}$  of   \eqref{eq:FD_advection} be symmetric. 
If the boundary operator $\Pi$ together with the discretization $\mat{Q_1}$ can be chosen such that 
\begin{equation}\label{test}
(\Pi+\Pi^T)- \left(\mat{Q_1}\mat{A}+ (\mat{Q_1}\mat{A})^T \right)
\end{equation}
has only non-positive eigenvalues $\mat{\Delta}$, then the scheme is (energy) stable. 
\end{theorem}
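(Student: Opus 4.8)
The plan is to run the classical discrete energy method on the semidiscretization \eqref{eq:FD_advection}. Since $\mat{M}$ is a finite element mass matrix it is in fact symmetric positive definite, so it induces a genuine discrete energy
\begin{equation*}
\|\vec{\bU}^h\|_{\mat{M}}^2 := (\vec{\bU}^h)^T \mat{M}\,\vec{\bU}^h ,
\end{equation*}
and (energy) stability will mean precisely that this quantity does not grow in time. First I would differentiate it in $t$; using that $\mat{M}$ is symmetric and time-independent, the two cross terms coincide and
\begin{equation*}
\frac{\dd}{\dd t}\,\|\vec{\bU}^h\|_{\mat{M}}^2 = 2\,(\vec{\bU}^h)^T \mat{M}\,\partial_t \vec{\bU}^h .
\end{equation*}

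Next I would substitute the scheme. In the linear case the penalty acts linearly, so $\mat{\Pi}(\vec{\bU}^h)=\Pi\,\vec{\bU}^h$ for a matrix $\Pi$, and \eqref{eq:FD_advection} gives $\mat{M}\,\partial_t \vec{\bU}^h = \Pi\,\vec{\bU}^h - \mat{Q_1}\mat{A}\,\vec{\bU}^h$. Inserting this produces
\begin{equation*}
\frac{\dd}{\dd t}\,\|\vec{\bU}^h\|_{\mat{M}}^2 = 2\,(\vec{\bU}^h)^T \Pi\,\vec{\bU}^h - 2\,(\vec{\bU}^h)^T \mat{Q_1}\mat{A}\,\vec{\bU}^h .
\end{equation*}
The key algebraic fact is that for any square matrix $B$ the scalar $(\vec{\bU}^h)^T B\,\vec{\bU}^h$ equals its own transpose, hence equals $\tfrac12 (\vec{\bU}^h)^T (B+B^T)\,\vec{\bU}^h$. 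Applying this with $B=\Pi$ and with $B=\mat{Q_1}\mat{A}$ symmetrizes both contributions and collapses the right-hand side into a single quadratic form,
\begin{equation*}
\frac{\dd}{\dd t}\,\|\vec{\bU}^h\|_{\mat{M}}^2 = (\vec{\bU}^h)^T \left[\,(\Pi+\Pi^T) - \left(\mat{Q_1}\mat{A}+(\mat{Q_1}\mat{A})^T\right)\right]\vec{\bU}^h ,
\end{equation*}
whose bracket is exactly the matrix displayed in \eqref{test}.

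To finish I would invoke the hypothesis. That bracketed matrix is symmetric, being a difference of two symmetric matrices, so it is orthogonally diagonalizable with a real spectrum $\mat{\Delta}$; the assumption that all of its eigenvalues are non-positive is therefore equivalent to negative semidefiniteness. Hence the quadratic form above is $\leq 0$ for every $\vec{\bU}^h$, so $\frac{\dd}{\dd t}\,\|\vec{\bU}^h\|_{\mat{M}}^2 \leq 0$, and integrating in time bounds the discrete energy by its initial value, which is exactly the asserted (energy) stability.

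I expect the genuinely delicate points to be matters of hypothesis rather than of calculation. One must ensure that $\mat{M}$ is not merely symmetric but positive definite, so that $\|\cdot\|_{\mat{M}}$ is a true norm and a bounded energy actually controls the solution; and one must be entitled to replace the operator $\mat{\Pi}$ by a matrix $\Pi$, which is exactly what linearity of the problem provides. The substantive modelling step, hidden inside the hypothesis, is that $\Pi$ has to be designed to absorb the skew/indefinite part of the volume term $\mat{Q_1}\mat{A}$ so that the combined operator in \eqref{test} is dissipative; checking that such a $\Pi$ can indeed be produced for a concrete discretization, as the boundary computation \eqref{eq:Q_inte} begins to suggest, is where the real work of the theory lies, although here it is taken as given.
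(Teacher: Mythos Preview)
Your argument is correct and is exactly the standard discrete energy method one expects here: differentiate $\|\vec{\bU}^h\|_{\mat{M}}^2$, substitute the scheme, symmetrize the quadratic forms, and read off negative semidefiniteness from the hypothesis on \eqref{test}. Your side remarks about needing $\mat{M}$ to be positive definite (not merely symmetric) and about $\mat{\Pi}$ being linear are also to the point.

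Note, however, that this paper does not actually give its own proof of the theorem: the result is quoted verbatim from Part~I of the series \cite{abgrall2019analysis} and used as background in Section~\ref{sec:linear}. So there is nothing in the present paper to compare against; your write-up simply supplies the (standard) argument that the cited reference contains.
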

Therefore, the determination of the operators $\Pi$ 
is essential to guarantee stability.
One is inspired by the continuous energy analysis as it can be seen in 
\cite{abgrall2019analysis} and also in the following.

\subsection{Estimation of the Boundary Operator}

Here, we present now away to estimate the boundary operators using the continuous energy analysis.
We consider as in \cite{abgrall2019analysis} the symmetric linear
hyperbolic system 
\begin{equation}\label{eq:2}
\begin{aligned}
 \dpar{U}{t}+A\dpar{U}{x}+B\dpar{U}{y}&=0,\quad&&  (x,y)\in \Omega, t>0\\
 M_nU&=G_n &&(x,y)\in\partial  \Omega, t>0
\end{aligned}
\end{equation}
where $A,B\in \R^{m\times n}$ are the Jacobian matrices of the system, the matrix $M_n\in \R^{q\times m}$ and the 
vector $G_n\in R^q$ are known. $q$ is the number of boundary conditions to satisfy. 
 $A,B$ are constant and the system \eqref{eq:2} is symmetrizable, i.e. it 
 exists a symmetric and invertible matrix $P$ such that for any vector $\bn=(n_x,n_y)^T$ the matrix 
\begin{equation*}
B_n=A_nP
\end{equation*}
is symmetric with $A_n=An_x+Bn_y$. 
Using the matrix $P$, one can introduce new variables $V=P^{-1/2}U$. 
The original variable can be expressed as
$U=P^{1/2}V$ and the original system \eqref{eq:2} will become 
\begin{equation}\label{eq:3}
 P^{1/2}\dpar{V}{t}+AP^{1/2}\dpar{V}{x}+BP^{1/2}\dpar{V}{y}=0.
\end{equation}
Multiplying \eqref{eq:3} form the left  by $P^{-1/2}$ we obtain the system 
\begin{equation}\label{eq:4}
\dpar{V}{t}+P^{-1/2}AP^{1/2}\dpar{V}{x}+P^{-1/2}BP^{1/2}\dpar{V}{y}=0.
\end{equation}
which is symmetric since $P^{-1/2}AP^{1/2}n_x+P^{-1/2}BP^{1/2}n_y=P^{-1/2}B_nP^{1/2}$.
Focusing on the boundary treatment of the problem and using the weak formulation, we get for the system \eqref{eq:2}:
\begin{equation}\label{eq:1_weak}
\dpar{U}{t}+A\dpar{U}{x}+B\dpar{U}{y}=\Pi(M_nU-G_n)\delta_{\partial \Omega},
\end{equation}
where $\delta_{\partial \Omega}$ is the boundary indicator function and $\Pi$ is our boundary projection 
operator. The estimation of the boundary operator is driven by the energy balance
for the weak formulation \eqref{eq:1_weak} in  the continuous setting. 
We define the global energy of the solution of 
 \eqref{eq:2} by
\begin{equation}\label{eq:global}
E=\frac{1}{2} \int_{\Omega}V^TU \dd \Omega,
\end{equation}
where we take the symmetrizable of the system \eqref{eq:2} under consideration. 
By multiplying \eqref{eq:1_weak} and integrating over $\Omega$, we obtain 
\begin{equation}\label{eq:energy_weak}
\int_{\Omega}V\left(\dpar{U}{t}+A\dpar{U}{x}+B\dpar{U}{y}\right) \dd \Omega= 
\int_{\partial \Omega}V^T\Pi(M_n-G_n)\dd \gamma.
\end{equation}
We reformulate the left side of \eqref{eq:energy_weak}. 
\begin{equation}\label{left:energy}
\begin{aligned}
&\int_{\Omega}V^T\left(\dpar{U}{t}+A\dpar{U}{x}+B\dpar{U}{y}\right) \dd \Omega
=\int_{\Omega}V^T\dpar{U}{t} \dd \Omega+ \int_{\Omega}V^T \left( A\dpar{U}{x}+B\dpar{U}{y} \right)\dd \Omega\\
=&\frac{\dd}{\dd t}\int_{\Omega}\frac{1}{2} V^TU \dd \Omega +\int_\Omega \left(P^{-1/2} U \right)^T \left(A\dpar{U}{x}+B\dpar{U}{y} \right) \dd \Omega= \frac{\dd E}{\dd t}+ \int_\Omega U^T P^{-1/2} \left(A\dpar{U}{x}+B\dpar{U}{y} \right) \dd \Omega\\
&=\frac{\dd E}{\dd t}+ \int_\Omega \frac{1}{2} V^T \left(A n_x+Bn_y \right)U \dd \gamma
=\frac{\dd E}{\dd t}+ \int_{\partial \Omega} \frac{1}{2} V^T A_nU \dd \gamma
\end{aligned}
\end{equation}
Combining \eqref{eq:energy_weak} and \eqref{left:energy}, we get the following energy balance:
\begin{equation}\label{eq:energy_end}
\frac{\dd E}{\dd t}+\int_{\partial \Omega} \left[ \frac{1}{2} V^T A_nU-V^T\Pi(M_nU-G_n) \right] \dd \gamma =0.
\end{equation}
For stability, the energy does not increase, i.e. $\frac{\dd E}{\dd t}  \geq 0$
which is guaranteed if the integral term of the left-hand side of \eqref{eq:energy_end} is non-negative. 
Therefore, 
\begin{equation}\label{eq:boundary}
\frac{1}{2} V^T A_nU-V^T\Pi(M_nU-G_n)\geq 0
\end{equation}
has to be fulfilled.  Inequality \eqref{eq:boundary} imposes the restrictions on the choice of the projection operator 
$\Pi$. Switching to the discrete Finite Element setting, the information of \eqref{eq:boundary} is used to determine 
$\Pi$ and lead to stable pure Galerkin schemes as it is considered in \cite{abgrall2019analysis}.

\section{Investigation of the Boundary Operator - Entropy Stability}\label{sec:non-linear}

As we have seen in section \ref{sec:linear} the key 
to guarantee energy stability using the SAT approach is a proper definition of the boundary operators.
Up-to-now exact integration is always considered in this context.
Therefore, the continuous setting is applied to estimate the boundary operators for linear 
problems using the energy method and the results transform directly from the continuous to the 
discrete framework. \\
Here, we focus on the more general approach and 
describe how to estimate the boundary operators for non-linear hyperbolic systems.\\
As we have seen in \ref{sec:linear}, one key is the  symetrizability of the system 
as it is already described in \cite{friedrichs1958symmetric}.
Therefore, the energy approach can be adapted as it is described in \cite{svard2014review}. 
As it was already named before, the symetrizability of a system is already guaranteed if 
a entropy function exists \cite{harten1983symmetric} but then, we can further 
extend the estimations considering in general non-linear conservation laws \eqref{eq:conservation_law_general}.\\
Here, we give now -up to our knowledge-  a first estimation on the boundary operators to extend
stability results. \\
Extension of the linear case which is explained before for systems. Here, we 
consider the nonlinear case. Therefore, 
we have again $U\in \R^m$. For the sake of simplicity, we are considering homogeneous boundary condition $G_n\equiv 0$ and 
we do further assume that the boundary matrix $\bs{M}_n$ is the identity matrix. Therefore, the boundary 
conditions reads
\begin{equation}
 \bs{M}_nU= U=0, \quad \bxx \in \partial \Omega, \; t>0,
\end{equation}
which means nothing else that the incoming waves are set to zero.\\
Later we restrict ourself to two dimensions but for now on we are 
still in $\Omega\subset \R^d$. 
We have the following conservation law
\begin{equation}\tag{\ref{eq:conservation_law_general}}
 \frac{\partial U}{\partial t}+\div \bbf(U)=0
\end{equation}
where $\bbf_j=\begin{pmatrix}
               f_{1,j}\\
               \hdots\\
               f_{m,j}\\
              \end{pmatrix}
$. If we impose the boundary conditions in the weak form by modifying the right side 
of the system \eqref{eq:conservation_law_general}
gets
\begin{equation}
 \label{eq:conservation_law_weak}
  \frac{\partial U}{\partial t}+\div \bbf(U)=\Pi U \delta_{\partial \Omega}
\end{equation}
where $\delta_{\partial \Omega}$ is the boundary indicator function and $\Pi$ is the boundary projection operator which will be specified.\\
%For the investigation, we need now a series of results especially about the entropy. 
% We will shortly introduce
% the following  concept from \cite{harten1983symmetric, mock1980systems, tadmor1987numerical}.
In section \ref{subsec:entropy} we already introduced the concept of entropy. Assuming that 
we have an entropy pair  $(\eta(U), \bbg(U))$ where $\eta$ is a convex entropy function and 
$\bbg$ is an entropy flux for \eqref{eq:conservation_law_general}.  
A symmetrization of \eqref{eq:conservation_law_general} will
be accomplished by introducing the new variable $V$ in place of $U$.
$V$ is called entropy variable and is given by $\eta'(U)=\nabla_{U} \eta(U)=V^T$.
The mapping mapping $U\to V$ is one-to-one
and instead of working with $U$ in \eqref{eq:conservation_law_weak} we are working with $V$ by setting
$U=U(V)$. 
In abuse of notation we won't introduce a new notation 
for this change of the variable, but 
we can rewrite 
\eqref{eq:conservation_law_weak} in terms of the entropy variable
and also using this to determine the boundary projection operator $\Pi$. 
Therefore, we get a dependence in $V$ of $\Pi$ on the right side of \eqref{eq:conservation_law_weak}.
Also by following \cite{harten1983symmetric, tadmor1987numerical}, we know 
the following relations are fulfilled 
where we write everything in terms of the entropy variable $V$.
Now, let us consider the weak formulation \eqref{eq:conservation_law_weak} in terms 
of the entropy variable $V$ and multiply directly by $\eta'$ and integrate in space.
We obtain 
\begin{align*}
    \frac{\partial U(V)}{\partial t}+\div \bbf(V)&=\Pi V \delta_{\partial \Omega} \\
  \int_{\Omega} V     \frac{\partial U(V)}{\partial t}\diff{\bx} 
+ \int_{\Omega}V^T \div \bbf(V) \diff{\bx}&= \int_{\Omega} V^T \Pi V \delta_{\partial \Omega} 
\diff{\bx}  
\end{align*}
Using \eqref{eq:entropy_flux}, we get
\begin{equation}
 \frac{\dd}{\dd t}\int_{\Omega}  \eta(V) \diff{\bx} +\int_{\Omega}
\div \bbg(U) \diff{\bx}= \int_{\partial \Omega} V^T \Pi V \diff{s} 
\end{equation}

Using Gau\ss-Green, we can re-write the second term and get 
\begin{equation}\label{eq:energy_g}
\begin{aligned}
   \frac{\dd}{\dd t}\int_{\Omega}  \eta(V) \diff{\bx} +\int_{\partial \Omega} \bbg(V)\bn \diff{s}= \int_{\partial \Omega} V^T \Pi U \diff{s} \\
      \frac{\dd}{\dd t}\int_{\Omega}  \eta(V) \diff{\bx} +\int_{\partial \Omega} \left( 
      \bbg(V)\bn - V^T \Pi V\right) \diff{s}=0.
\end{aligned}
\end{equation}
For stability \eqref{eq:energy_g}, 
we have to determine $\Pi$ in a way that the left side is not positive or 
\begin{align*}
 \bbg \bn-V^T\Pi V\geq 0 \Longleftrightarrow  \bbg\bn\geq V^T\Pi V 
\end{align*}
Therefore, we have a closer look on $\bbg\bn$. This is a scalar function 
and by assuming smoothness we can apply the mean value theorem\footnote{In abuse of notation 
we use $\bbg'$ for the derivative of $\bbg\bn$ and do not apply everywhere the normal vector $\bn$.
Similar also for the relation \eqref{eq:entropy_flux}.}.
It is 
\begin{equation}\label{eq:mean_the_g}
 \bbg(V)\bn -\bbg(0)\bn= \int_0^1 \bbg'(tV+(1-t)0)V\diff t 
\end{equation}
Using \eqref{eq:entropy_flux} by
\begin{equation*}
  \bbg'(tV+(1+t)0)=tV^T  \bbf'(t V)
\end{equation*}
 yields in \eqref{eq:mean_the_g}
\begin{equation}\label{eq:mean_f}
 \begin{aligned}
  \bbg(V)\bn&= \bbg(0)\bn+ \int_0^1 tV^T \bbf'(t V)V\diff t, \\  
  \bbg(V)\bn&= \bbg(0)\bn+ V^T \underbrace{\left(\int_0^1 t\bbf'(tV) \diff t \right)}_{:=\bFF} V . 
 \end{aligned}
\end{equation}
To estimate $\Pi$ we apply \eqref{eq:mean_f} and get
\begin{equation}\label{eq:boundary_estimate}
 \bbg \bn-V^T\Pi V\geq 0 \stackrel{\eqref{eq:mean_f}}{\Longleftrightarrow} \bbg(0)\bn +V^T\left(\bFF-\Pi\right)V\geq 0 
\end{equation}
$\bbg(0)\bn$ is a constant factor whereas we have to calculate the rest 
depending on the $\bFF$.
The term $\bFF$ can be calculated either with a numerical quadrature formula or
exact. We will demonstrate both calculations in section \ref{sec:numerical}.
Nevertheless, two questions automatically rise:
\begin{itemize}
 \item What is the connection between this estimation and the classical SAT approach 
 for linear problems? 
 \item  How does this estimation affect the stability properties of our continuous
 Galerkin method?
\end{itemize}
The first question can be easily answered. This approach directly simplifies to the linear 
case as it is described before using $U$ for the entropy variable $V$. \\
The second question is more difficult. 
As we mentioned in section \ref{sec:linear} the exactness of the quadrature rule
is essential to transform the estimation from the continuous to the discrete setting.
For a non-linear flux functions this is impossible and we get because of this directly stability issues even for scalar 
equations. Here, the correction term introduced in section \ref{subsec:entropy} comes into play. 
It balance on every inner degree of freedom the entropy error. It allows to have exact calculation of this
part like exact integrations are performed and hence the only remaining term is the boundary term. 
However, it further can 
rescue also in the linear case the scheme if for example the quadrature is not exact enough. We will 
present one example also for this case in the following section.\\
However, the correction term guarantees that in the discrete setting the entropy condition is fulfilled and we can prove the following:
\begin{theorem}\label{eq:entropy stability}
A continuous Galerkin semi-discretization \eqref{eq:finite_semidiscrete} for a hyperbolic conservation law \eqref{eq:conservation_law_general} 
together with the entropy correction term \eqref{eq:correction} and the SAT 
 approach with the boundary operator \eqref{eq:mean_f} is entropy stable. 
\end{theorem}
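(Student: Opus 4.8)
The plan is to obtain a \emph{discrete} entropy balance by contracting the assembled semi-discretization with the entropy variables at the degrees of freedom, and then to argue that the interior is exactly entropy conservative (because of the correction term) while the surviving boundary contribution is sign-definite (because of the choice of $\Pi$). Concretely, I would start from the fully assembled scheme, which for every $\sigma\in\SS$ reads
$$(\mat{M}\partial_t\vec{U}^h)_\sigma+\sum_{K|\sigma\in K}\hat{\ResKs}+\sum_{\Gamma\in\partial\Omega|\sigma\in\Gamma}\Phi^\Gamma_\sigma=0,$$
multiply the equation at $\sigma$ on the left by $V_\sigma^T$, and sum over all degrees of freedom. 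Using $V=\eta'(U)$ at the nodes, the mass term assembles into the rate of the discrete entropy, which I denote $\frac{\dd\Et}{\dd t}$ with $\Et=\int_\Omega\eta(U^h)\,\dd\bx$, so that the remaining task is to control the spatial and boundary pairings.

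Next I would reorganize the interior sum element by element and invoke the defining property of the correction term. By construction \eqref{eq:entropy_condition}, for each $K$ one has the exact local identity
$$\sum_{\sigma\in K}\est{V_\sigma,\hat{\ResKs}}=\oint_{\partial K}\gnum\!\left(V^h_{|K},V^h_{|K^-}\right)\bn\,\dd s,$$
so that $r_\sigma$ enforces element-wise entropy conservation irrespective of the quadrature used; this is exactly the mechanism that lets us treat the interior as if integration were performed exactly, even for a nonlinear flux. Summing these element identities over all $K$, the numerical entropy flux $\gnum$ is evaluated on each interior face from both adjacent elements with opposite normals; since $V^h$ is globally continuous in the $\VV=\VV^h\cap C^0(\Omega)$ setting and $\gnum$ is consistent and single-valued, the interior face contributions cancel in pairs and only the physical boundary survives. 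Collecting the boundary residuals $\Phi^\Gamma_\sigma$, which carry the SAT penalty $V^T\Pi V$, the balance reduces to
$$\frac{\dd\Et}{\dd t}+\oint_{\partial\Omega}\left(\bbg(V)\bn-V^T\Pi V\right)\dd s=0.$$

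Finally I would close the argument with the boundary estimate of this section. The mean value representation \eqref{eq:mean_f}, $\bbg(V)\bn=\bbg(0)\bn+V^T\bFF V$ with $\bFF=\int_0^1 t\,\bbf'(tV)\,\dd t$, together with the homogeneous incoming condition $U=0$ (which makes $\bbg(0)\bn$ vanish or controllable), turns the boundary integrand into $V^T(\bFF-\Pi)V$. Because $\Pi$ is chosen precisely so that \eqref{eq:boundary_estimate} holds, i.e. $\bbg(0)\bn+V^T(\bFF-\Pi)V\ge0$ on $\partial\Omega$, the boundary integral is non-negative and hence $\frac{\dd\Et}{\dd t}\le0$, which is the discrete counterpart of \eqref{eq:entropy_solution}; this is entropy stability.

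I expect the main obstacle to be the two identifications that the correction term is engineered to absorb. First, the nodal contraction $\sum_\sigma V_\sigma^T(\mat{M}\partial_t\vec{U}^h)_\sigma$ represents the rate of the \emph{discrete} entropy rather than $\frac{\dd}{\dd t}\int_\Omega\eta(U^h)$ computed from the exact interpolant $\eta'(U^h)$; for a nonlinear flux these differ, and no quadrature removes the gap. Second, one must check that the element-wise equality \eqref{eq:entropy_condition} genuinely survives global assembly so that interior faces telescope cleanly. The delicate point throughout is that $r_\sigma$ is defined so as to make \eqref{eq:entropy_condition} an equality \emph{by design}, while simultaneously satisfying $\sum_{\sigma\in K}r_\sigma=0$ so that conservation and accuracy are untouched; once this property and the single-valuedness of $\gnum$ on shared faces are in place, the boundary estimate \eqref{eq:boundary_estimate} makes the sign of the remaining boundary term immediate.
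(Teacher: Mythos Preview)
Your proposal is correct and follows essentially the same approach as the paper: contract the semi-discrete scheme with the entropy variables, use the correction term \eqref{eq:correction} to enforce the element-wise identity \eqref{eq:entropy_condition} so that the interior telescopes to the boundary, and then invoke the boundary estimate \eqref{eq:boundary_estimate} to conclude $\frac{\dd\Et}{\dd t}\le0$. The paper's own proof is a brief sketch of this same chain of reasoning; your version is in fact more explicit, and the obstacles you flag (the identification of the mass-matrix contraction with the discrete entropy rate, and the clean telescoping across interior faces via continuity of $V^h$ and single-valuedness of $\gnum$) are genuine subtleties that the paper does not spell out.
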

\begin{proof}
The proof is a combination of the above considered calculation together with the properties of the correction term \eqref{eq:correction}. 
Enforcing the boundary condition weakly using the SAT approach yields a pde \eqref{eq:conservation_law_weak}. 
The semi-discretization with a continuous Galerkin approach leads to some stability issues due to the entropy production of the 
space discretization. However, the correction term is constructed in away to delete/ cancel out these. 
Multiplying with 
the approximation of the entropy variable $V^h$ and using the above manipulation with the boundary operations gives us  finally  
an inequality \eqref{ineq:entropy_condition} in the semi-discrete setting. This shows \ref{eq:entropy stability}.
\end{proof}
Let us consider a first example how to construct the boundary operator \eqref{eq:mean_f}. 
\begin{ex}\label{ex:Burgers}
We are considering the scalar Burgers' equation 
 in this context. The flux is given by $f(U)=\frac{U^2}{2}$. The entropy pair is
 $(\eta,g)=(\frac{U^2}{2},\frac{U^3}{3})$ and the entropy variable $\eta'(U)=V=U$.
 Therefore, we do not have to change anything and we can directly estimate $\Pi$ via $\bFF$.
  $\bFF=\int_0^1 Ut^2 \diff t =\frac{1}{3} U$
and so,
\begin{equation}\label{eq:inequality_burger_general}
U\left(\frac{1}{3}U-\Pi\right)U\geq 0\Longleftrightarrow \frac{1}{3}U\geq \Pi 
\end{equation}
at the boundary $\partial \Omega$. 
We will see the result \eqref{eq:inequality_burger_general} even more often in 
some other examples later.
\end{ex}

\begin{remark}\label{re:time_entropy}
 Entropy stability is guaranteed in the semidiscrete setting. However, to construct fully discrete entropy stable continuous Galerkin schemes
 three different ways can be applied but not further investigated here.
 \begin{itemize}
  \item Using the SBP-SAT technique in time \cite{nordstrom2013summation, friedrich2019entropy} 
lead to  an implicit time integration scheme where the stability 
 results transform directly from the semidiscrete setting to the fully discrete ones. 
 \item  To obtain an explicit method the relaxation 
 technique of Ketcheson \cite{ketcheson2019relaxation} can be applied where after each time step a non-linear equation has to be solved.
 \item  Finally, by using modal filters/artificial viscosity in an adaptive way as presented in \cite{glaubitz2016artificial}, fully discrete 
 entropy stable continuous Galerkin  can be constructed. 
 \end{itemize}
\end{remark}

\begin{remark}\label{eq:appendix}
 Different from the investigation in this section, there exists also other possibilities 
 to determine  the boundary operators. Hence,  most if not all them comes with some restriction 
on the problem or the flux function. If one considers for example
a multi-dimensional scalar case, another approach would be the use of a Taylor series expansion in respect 
to $\bbg(V)\bn:=T_g(V)$ where sufficient smoothness is assumed.\\
Another possibility is if one has some symmetric behavior of the flux function
and can bring all the information to the boundary. 
This is the key in the investigations of \cite{friedrichs1958symmetric, ern2006discontinuous, nordstrom2017roadmap} 
for the linear case and for the nonlinear case \cite{nordstrom2018energy} where the incompressible Navier-Stokes equation is considered.\\
Finally, also   an recursive relation for the flux function can be found and used to determine the boundary operators.
However, these approaches are  not part of this investigation. 
\end{remark}

\section{Numerical Examples}\label{sec:numerical}

We demonstrate that a pure Galerkin scheme is entropy stable if the boundary procedure is done 
via the SAT approach which is introduced in section \ref{sec:non-linear}, and the correction terms are used. 
We impose the boundary conditions weakly and use an adequate boundary operator. 
We use the estimation \eqref{eq:boundary_estimate} and develop $\bbF$
using exact integration and numerical quadrature rules in the experiments (nearly no differences can be seen).
If  $\bbF$ is non-negative, we set the value to zero and for negative $\bbF$
we apply this value. \\
As basis functions either Bernstein or Lagrange polynomials of different 
orders (second to fourth order) on triangular meshes will be applied. 
Nearly no difference can be seen by applying either of these two bases. \
The time integration is done via a strong stability preserving Runge-Kutta (RK) method 
with the same order of accuracy as the space discretization, e.g. \cite{gottlieb2011strong} for details.
Our scheme is semi-discrete entropy stable. Nevertheless, if shocks appear oscillations are
seen and stability issues can appear especially for higher orders.
One can increase the parameters in the free penalty terms 
or add additional streamline or diffusion terms to the correction terms
to increase the stability properties (up-to-now we have only used 
correction terms to guarantee entropy conservation). 

Also approaches to build fully discrete entropy stable 
schemes as described in remark \ref{re:time_entropy} can be used
but all of this is not topic of the current investigation. 

\subsection{Linear Equations}

In the first experiments, we will focus on the correction terms and the influence of these terms on
the scheme. The term \eqref{eq:correction} works specifically on every degree of
freedom and is constructed to force the entropy condition \eqref{eq:entropy_condition}.
\subsubsection*{Stabilisation  Term for Linear Advection}
In the first test, we are  studying  the same problem as described in \cite{abgrall2019analysis}
and consider the  linear advection equation
\begin{equation}\label{eq:scalar}
\dpar{U}{t} + \ba(x,y) \cdot \nabla U = 0, \quad (x,y) \in \Omega, \ t > 0, \\
\end{equation}
where $\ba = (a,b)=(1,0)$ is the advection speed and $\Omega=[0,1]^2$ the domain.
The initial condition is given by
\begin{equation*}
U(x,y,0)=\begin{cases}
          \e{-40r^2},\quad \text{ if }  r=\sqrt{(x-x_0)^2-(y-y_0)^2}<0.25, \\
           0, \qquad \text{ otherwise }
          \end{cases}\\
\end{equation*}
It is a small bump with hight one and located  at $(x_0,y_0)=(0.3,0.3)  $.
We consider homogeneous boundary conditions $G_n\equiv0$
we do further assume that the boundary Matrix $M_n$ is the identity matrix. 
The boundary conditions reads $M_nU=U=0$ for $(x,y)\in \partial \Omega, \;t >0$
 which means nothing else that the incoming waves are set to zero.\\
The bump is moving to the right with speed $1$ and no vertical movement
can be seen. 
By using a pure Galerkin scheme and the SAT boundary procedure 
it can be shown that the scheme is stable under the restriction 
that the used quadrature formula for the mass matrix is exact, c.f. \cite{abgrall2019analysis}.
If we lower the accuracy of the applied quadrature rule the mass matrix is not exact 
anymore (up to machine precision). We touch only the highest order and 
so the mass matrix $M$ in \eqref{eq:finite_semidiscrete} is not exact 
whereas the discretization describes $F$ exact. \\
Even with a low CFL number  $CFL=0.01$ we run into stability issues
and the scheme crashes as it is shown in  in Figure \ref{linear_advection_crash}.
In figure \ref{fig:crash_2}, the structure 
of the bump can still be seen, but, simultaneously, 
the  minimum value is $\approx -2.996$ and the maximum value is around $2.7$.
By going further in time it is getting worth. 
 \begin{figure}[!htp]
 \centering 
   \begin{subfigure}[b]{0.3\textwidth}
    \includegraphics[width=\textwidth]{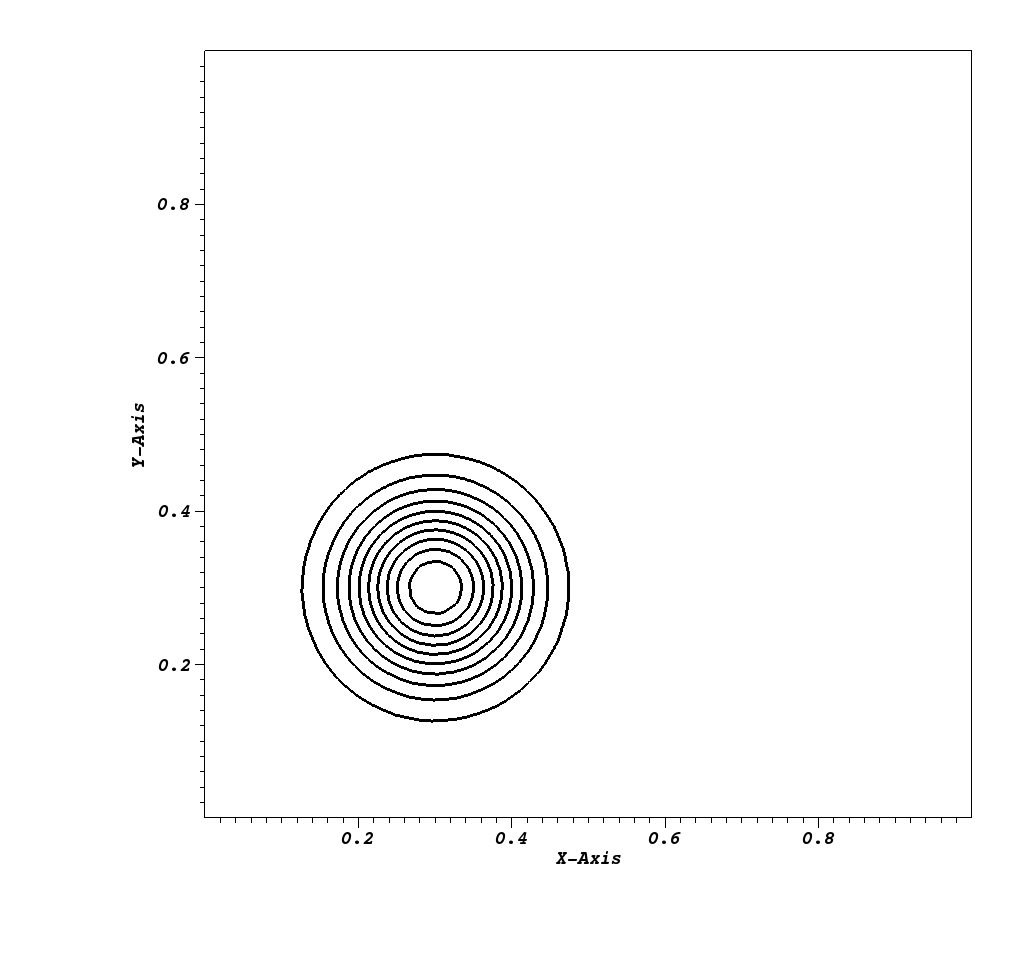}
    \caption{Initial data}
  \end{subfigure}%
   \begin{subfigure}[b]{0.3\textwidth}
    \includegraphics[width=\textwidth]{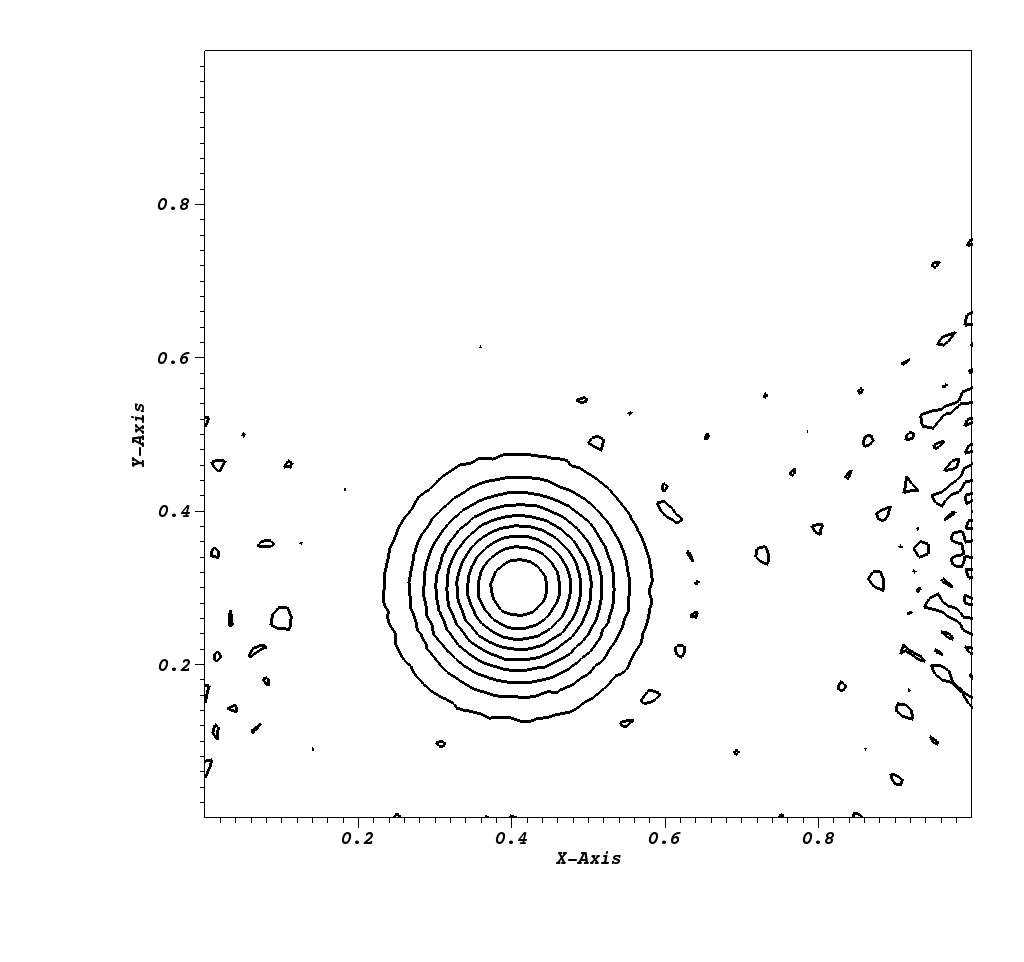}
    \caption{300 steps}
  \end{subfigure}%
     \begin{subfigure}[b]{0.3\textwidth}
    \includegraphics[width=\textwidth]{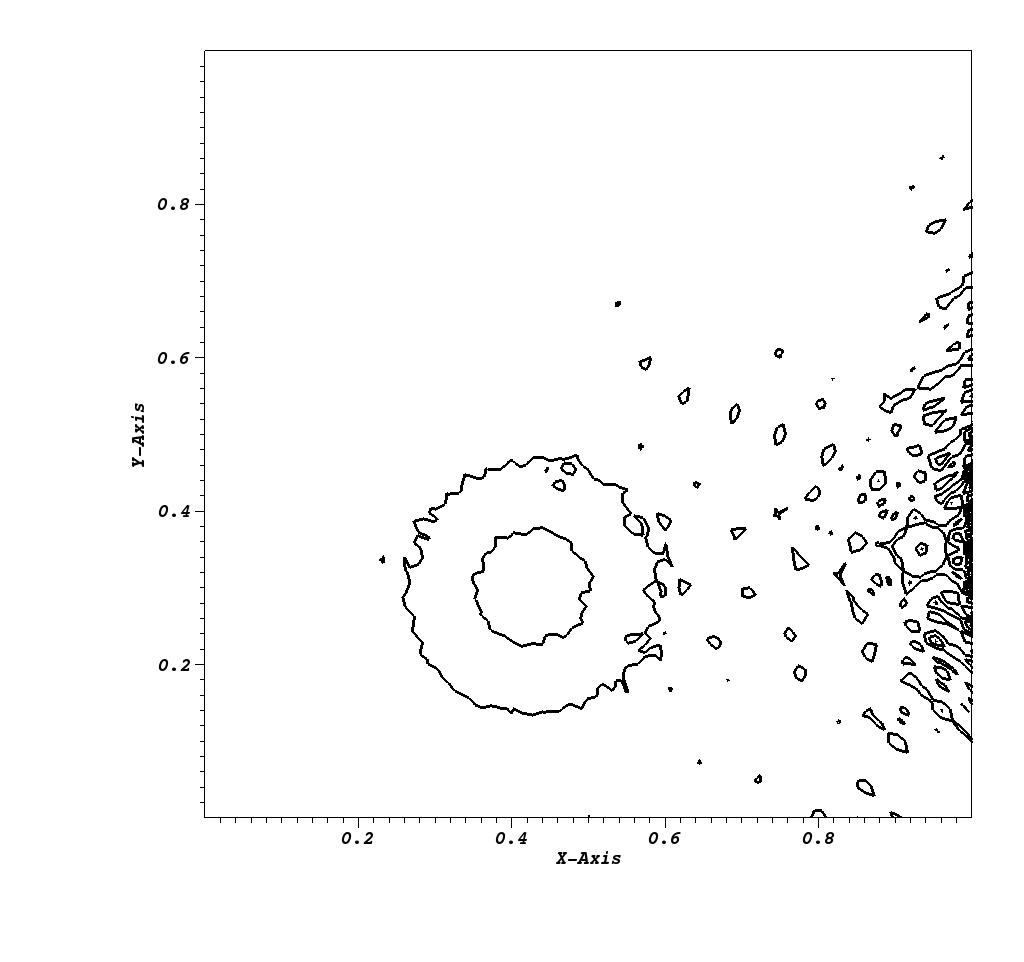}
    \caption{\label{fig:crash_2} 350 steps}
  \end{subfigure}%
     \caption{\label{linear_advection_crash} 4-th order scheme in space and time}
 \end{figure}
By applying the correction term the inaccuracy of the mass matrix can 
be compensated. We run the same test with the correction terms. 
The scheme remain completely stable as it can be seen in Figure
\ref{linear_advection_crash_inexact}.

 \begin{figure}[!htp]
 \centering 
   \begin{subfigure}[b]{0.3\textwidth}
    \includegraphics[width=\textwidth]{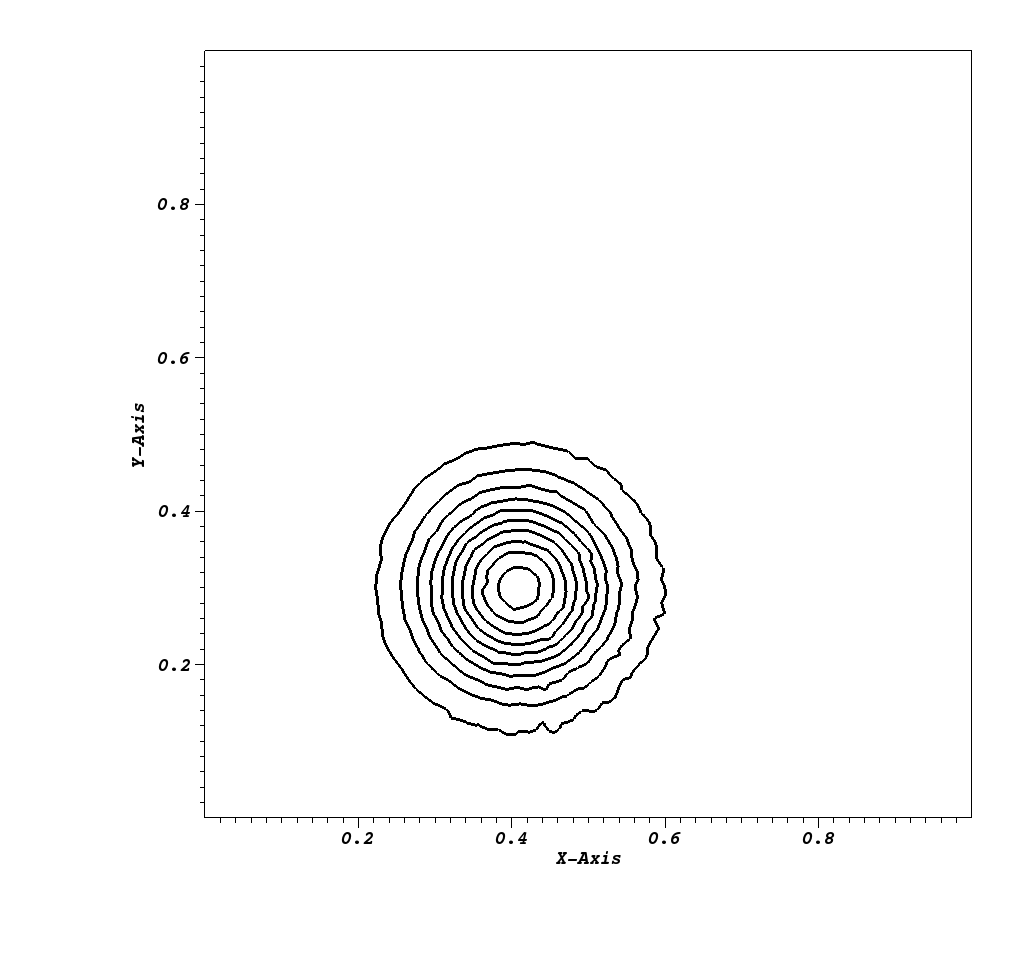}
    \caption{300 steps}
  \end{subfigure}%
     \begin{subfigure}[b]{0.3\textwidth}
    \includegraphics[width=\textwidth]{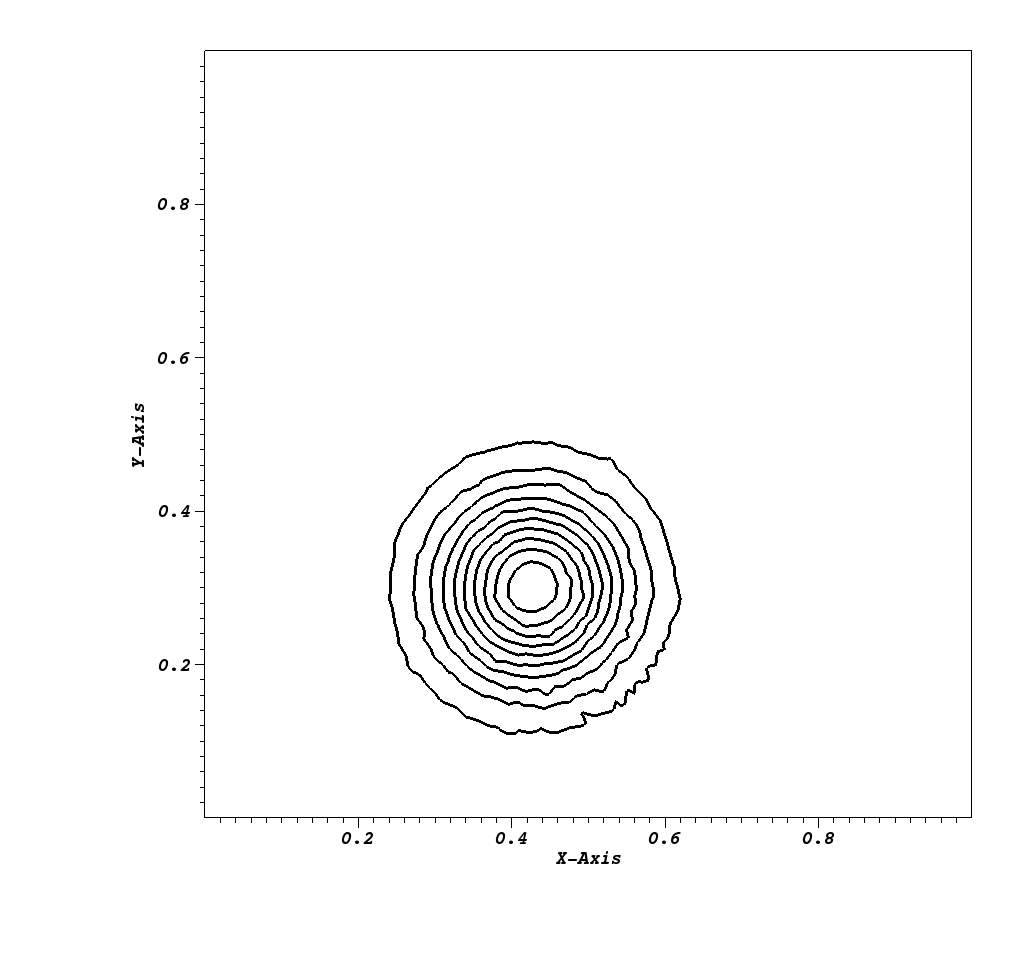}
    \caption{\label{fig:crash} 350 steps}
  \end{subfigure}%
       \begin{subfigure}[b]{0.3\textwidth}
    \includegraphics[width=\textwidth]{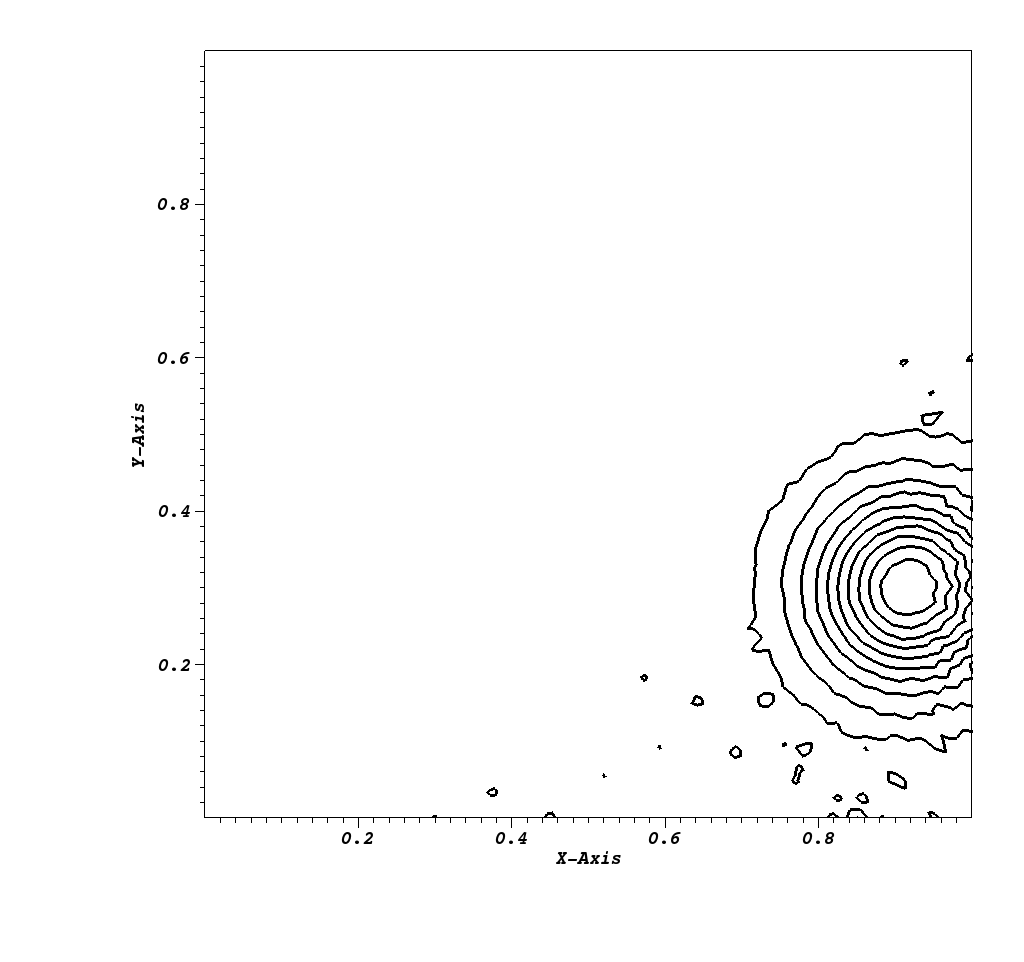}
    \caption{\label{fig:crash} 1700 steps}
  \end{subfigure}%
     \caption{\label{linear_advection_crash_inexact} 4-th order scheme in space and time with correction}
 \end{figure}
 In this test, the correction term works as a classical stabilization 
 term to compensate 
the inaccuracy of the mass matrix. The term works at every degrees of freedom 
and is only applied where it is needed. \\
So, in case of a wrong/imprecise implementation of the Galerkin 
scheme, the term can rescue the stability properties.
Nevertheless, for linear problems it should not be needed if 
the boundary procedure is done via the SAT approach described here 
and more specific in the first part of this series \cite{abgrall2019analysis},
see Figure \ref{linear_advection_1}.

 \begin{figure}[!htp]
 \centering 
   \begin{subfigure}[b]{0.3\textwidth}
    \includegraphics[width=\textwidth]{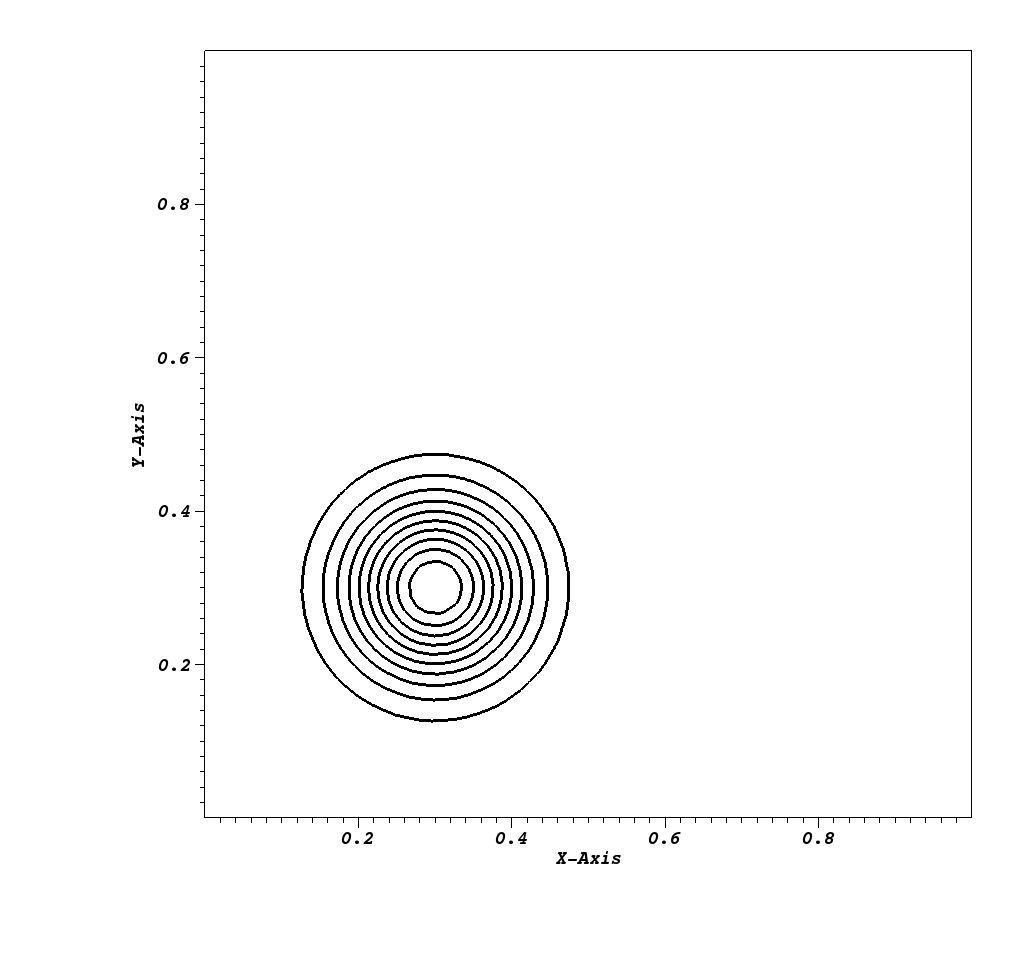}
    \caption{Initial data}
  \end{subfigure}%
   \begin{subfigure}[b]{0.3\textwidth}
    \includegraphics[width=\textwidth]{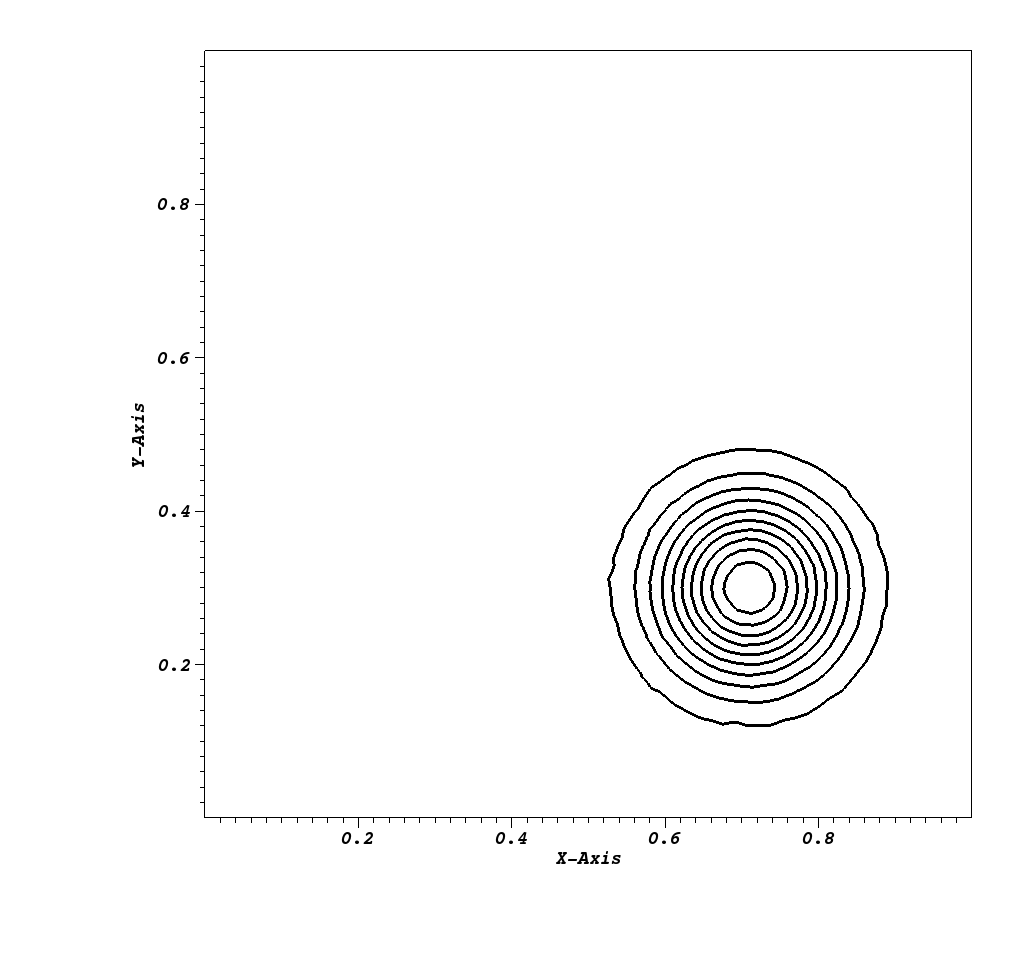}
    \caption{ 100 steps}
  \end{subfigure}%
     \begin{subfigure}[b]{0.3\textwidth}
    \includegraphics[width=\textwidth]{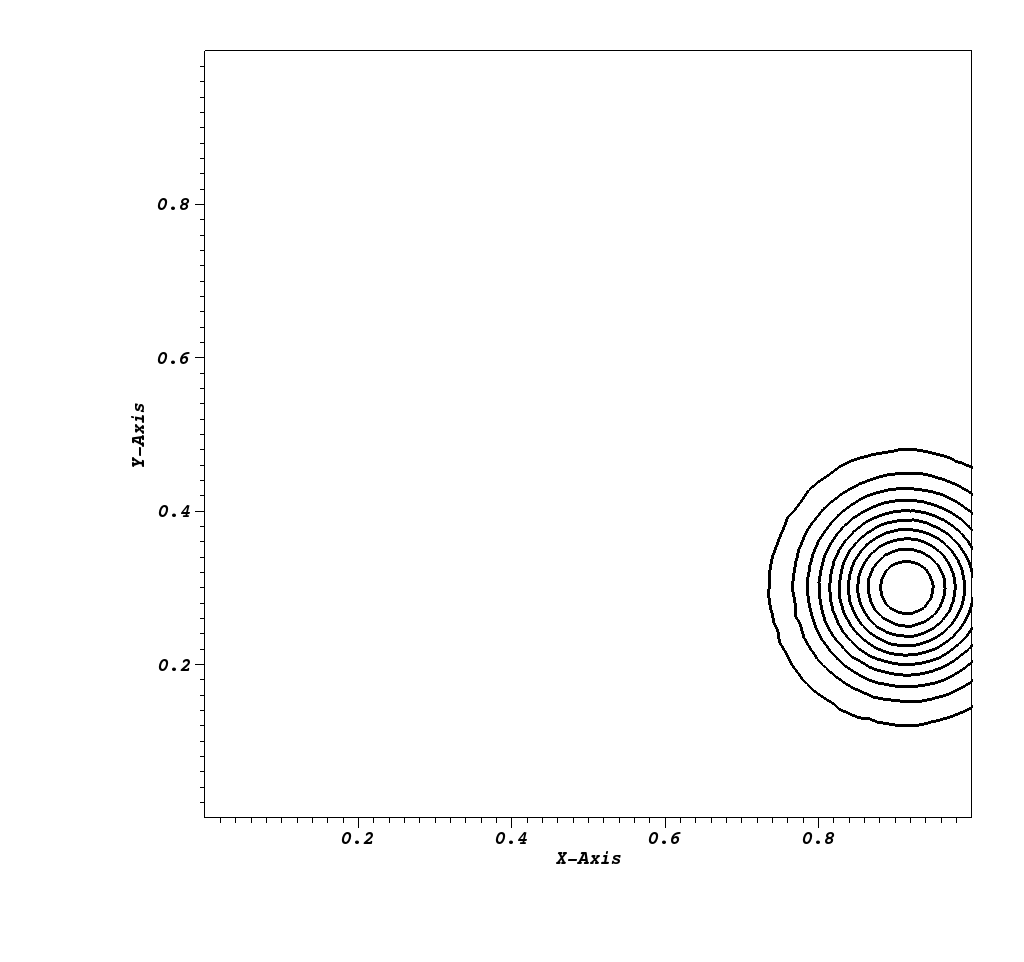}
    \caption{ 150 steps}
  \end{subfigure}%
     \caption{\label{linear_advection_1} 4-th order scheme in space and time, CFL=0.3}
 \end{figure}
 In the next part, we analyze the total amount  
and the influence of the correction terms 
on an exact pure Galerkin scheme. 

\subsubsection*{Influence of the Correction Terms}

The second test is a linear rotation equation in two space dimensions
given by
\begin{equation}\label{eq:linear_advec_two}
\begin{aligned}
      \partial_t u(t,x,y) + \partial_x(2\pi y u(t,x,y)) + \partial_y(2\pi x u(t,x,y)) =0&,
      && (x,y) \in D, t\in (0, 1), \\
     u(0,x,y)=u_0(x,y)=\exp\bigl( -40(x^2+(y-0.5)^2) \bigr)&,&& (x,y)\in D,
\end{aligned}
\end{equation}
where $D$ is the unit disk in $\R^2$.
For the boundary, outflow conditions will be considered and the estimations are done using
the techniques from section \ref{sec:non-linear}. 

For time integration, the third order strong stability preserving scheme SSPRK(3,3) will be used with
CFL number $0.2$ and the correction in each step will be done.
A pure continuous Galerkin scheme of third order is used and Bernstein polynomials
are applied as basis functions.
In this test, a small bump which is located around $(0,0.5)$
is moving around in a circle. The rotation will be finished at $t = 1$.
In figure \eqref{fig:initial} the initial state is presented where 
figure \eqref{fig:mesh_rot} shows the used mesh.
 \begin{figure}[!htp]
 \centering 
   \begin{subfigure}[b]{0.4\textwidth}
    \includegraphics[width=\textwidth]{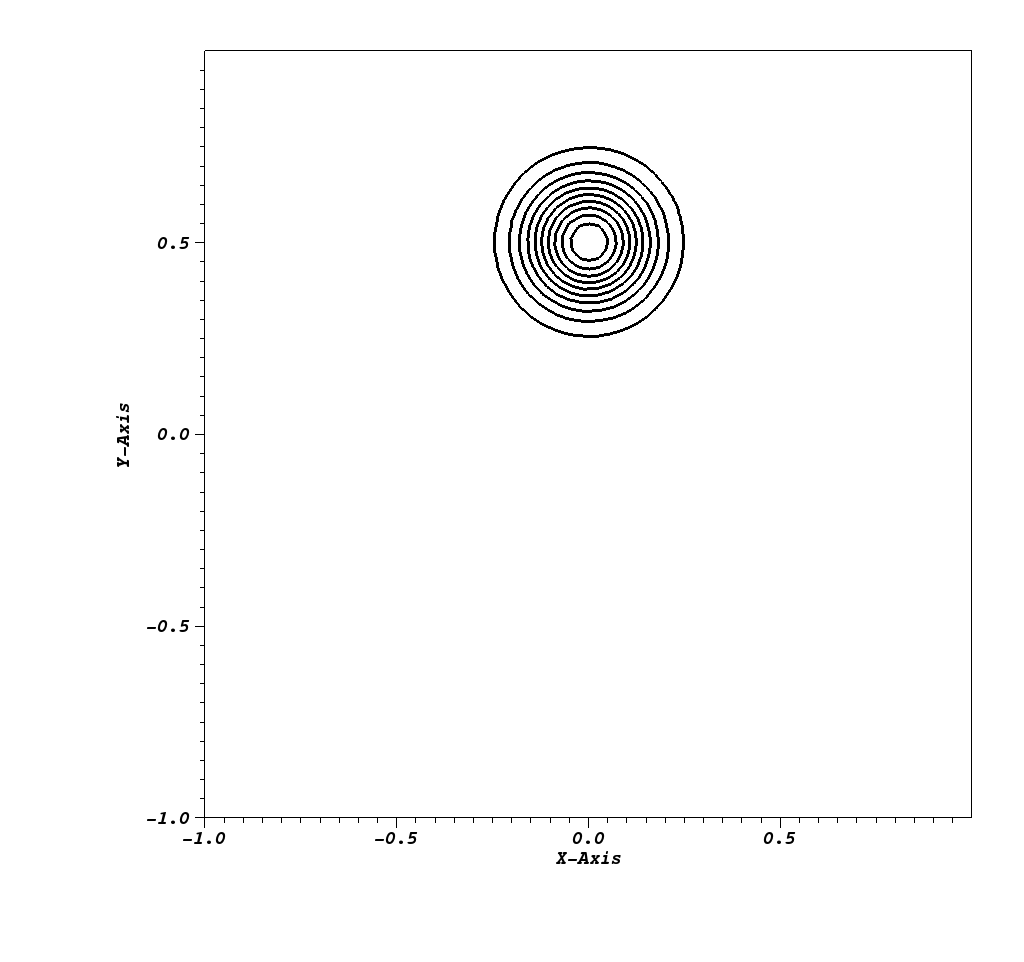}
    \caption{\label{fig:initial} Initial data}
  \end{subfigure}%
   \begin{subfigure}[b]{0.4\textwidth}
    \includegraphics[width=\textwidth]{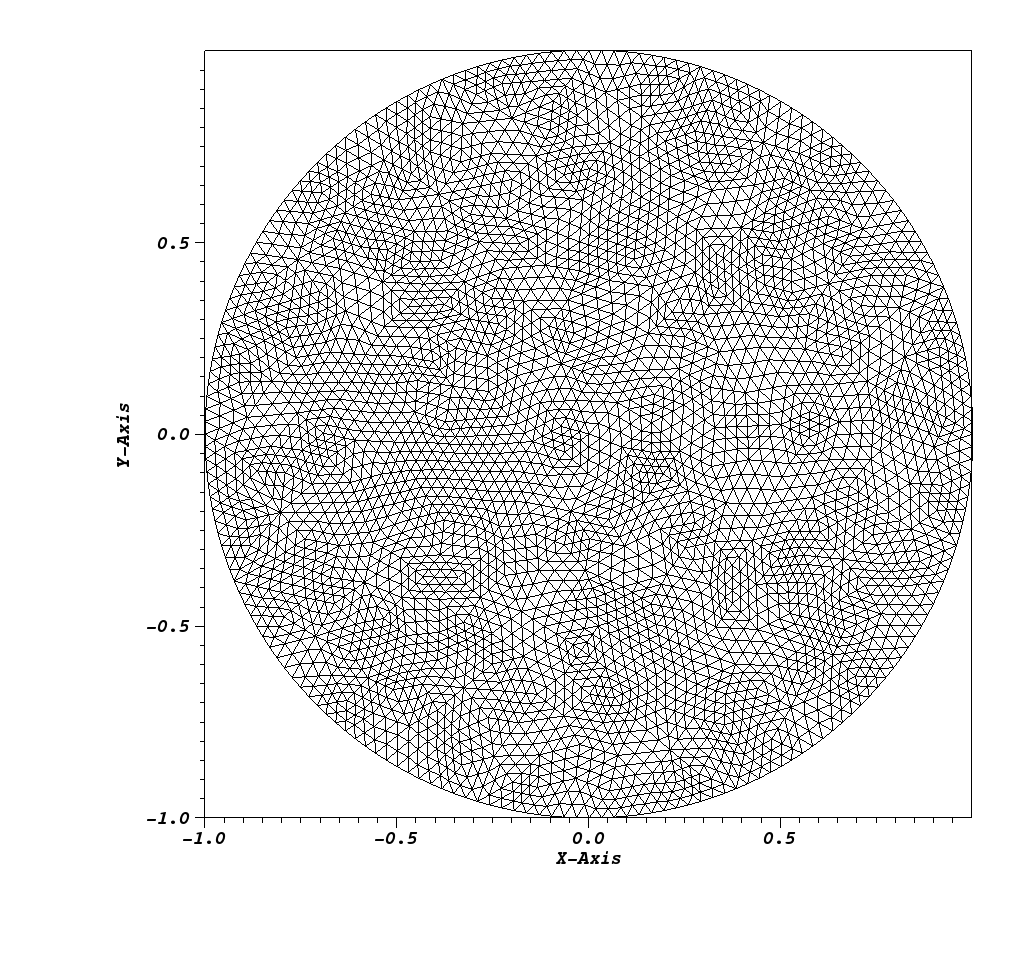}
    \caption{\label{fig:mesh_rot} Mesh}
  \end{subfigure}%
     \caption{4-th order scheme in space and time}
 \end{figure}
The mesh contains 3582 triangular elements. 
We calculate the change of the entropy 
\begin{equation}\label{eq:entropy_change}
    \int_\Omega U^2_{corr}(t)-\int_\Omega U^2_0
\end{equation}
after one rotation (1 second). The value is given by
$6.5020614437081673\cdot 10^{-12}$. Using a finer grid or smaller CFL number
will also lower the entropy production in time. 
Finally, we like to mention 
that we renounce on an error plot here since nearly no differences to
the results presented in \cite{abgrall2019analysis} can be seen. 

% Next, we focus on the error behaviors for this test problem
% including the correction term and the total amount of 
% it for this test.

 Next,  non-linear equations will be considered.

\subsection{Burgers' Equation}
Here, we focus first on a non-linear problem. We study Burgers' equation 
\begin{equation}\label{eq:linear_advec_two}
\begin{aligned}
      \partial_t u(t,x,y) +\partial_x(0.5 u^2(t,x,y)) + \partial_y (0.5 u^2(t,x,y))=0&,
      && (x,y) \in D, t\in (0, 0.3), \\
     u(0,x,y)=u_0(x,y)=\exp \bigl( -40((x-0.5)^2+(y-0.5)^2) \bigr)&,&& (x,y)\in D.
\end{aligned}
\end{equation}
We use the same mesh and chose also the boundary condition as before. 
The bump is now located in the third quadrant and moves diagonal up. We are considering a non-linear problem and
a shock is formed in time which can 
be recognized in figures \ref{burgers_2}-\ref{burgers_4}.  
The boundary operator $\Pi$ is calculated  using the estimation \eqref{eq:inequality_burger_general}
where we set for non-negative $U$ the boundary operator to  to zero and for negative $U$ we apply 
$\frac{U}{3}$. This procedure guarantees  the inequality  \ref{eq:inequality_burger_general}.
Note that we can also determinate the boundary operator directly calculating 
\eqref{eq:mean_f} using a quadrature rule. Indeed, for most of the quadrature rules this value 
will be exact. 
 \begin{figure}[!htp]
 \centering 
   \begin{subfigure}[b]{0.6\textwidth}
    \includegraphics[width=\textwidth]{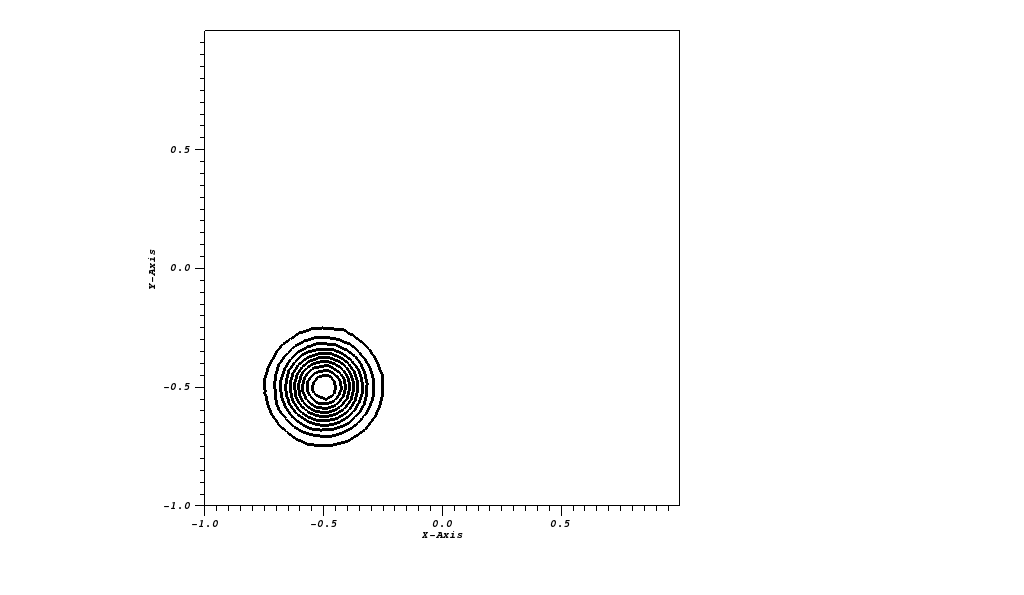}
    \caption{Initial data}
  \end{subfigure}%
   \begin{subfigure}[b]{0.6\textwidth}
    \includegraphics[width=\textwidth]{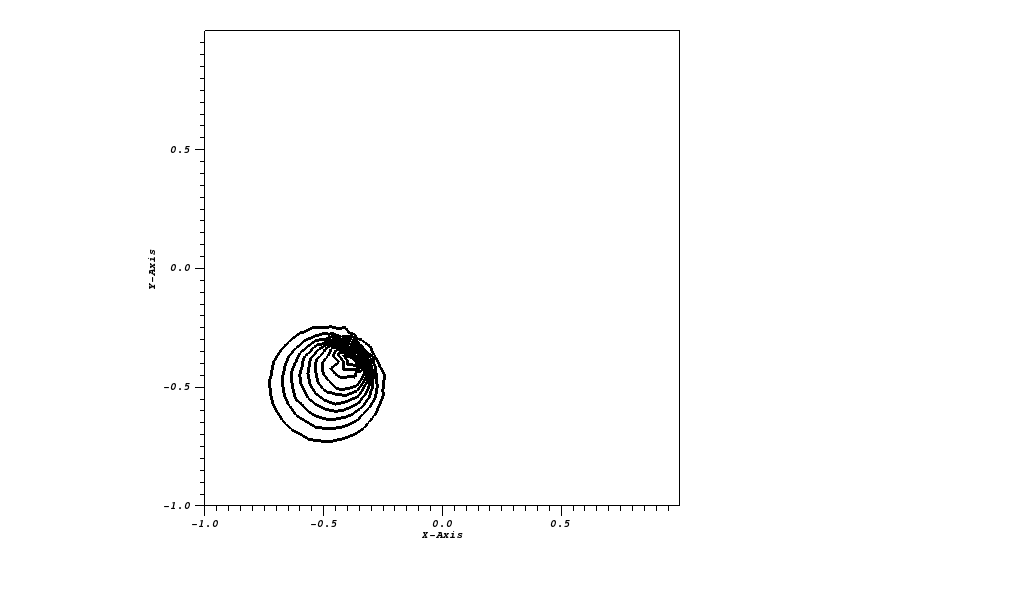}
    \caption{140 steps}
  \end{subfigure}%
  \\
     \begin{subfigure}[b]{0.6\textwidth}
    \includegraphics[width=\textwidth]{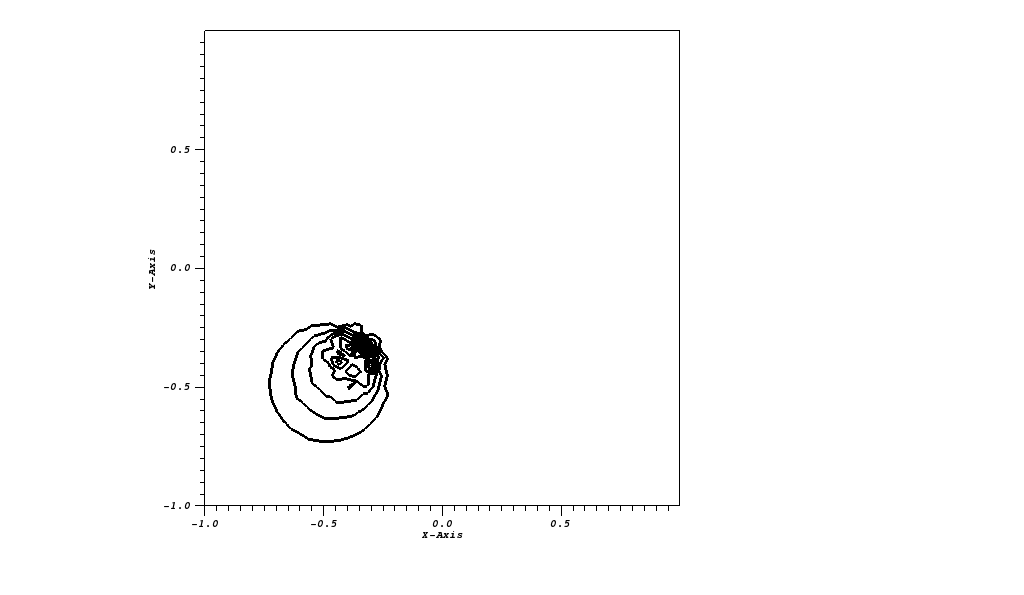}
    \caption{\label{fig:crash} 200 steps}
  \end{subfigure}%
       \begin{subfigure}[b]{0.6\textwidth}
    \includegraphics[width=\textwidth]{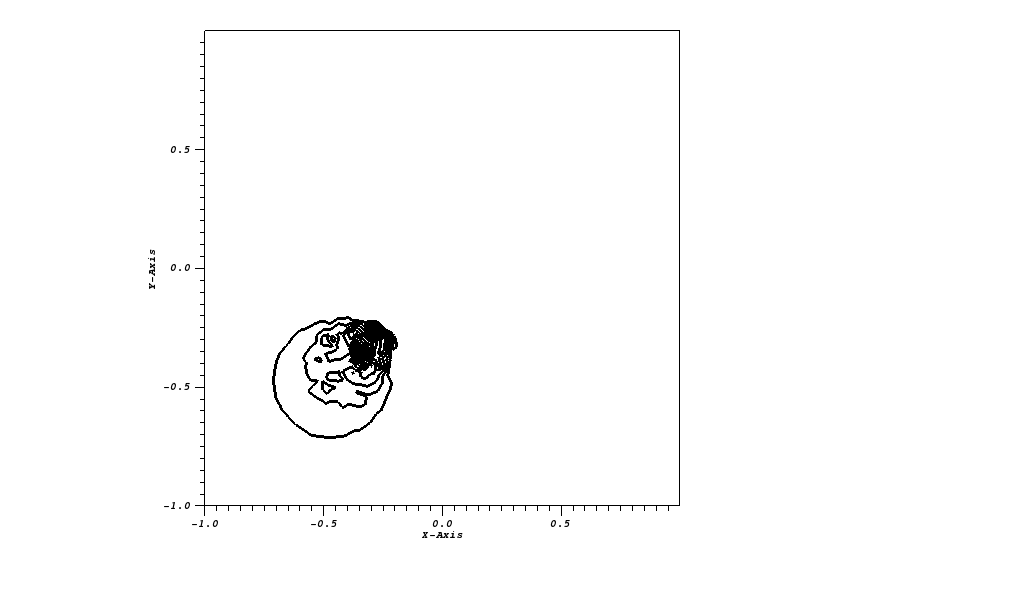}
    \caption{\label{fig:crash} 336 steps}
  \end{subfigure}%
     \caption{\label{burgers_2} 2-th order scheme in space and time}
 \end{figure}

  \begin{figure}[!htp]
 \centering 
   \begin{subfigure}[b]{0.3\textwidth}
    \includegraphics[width=\textwidth]{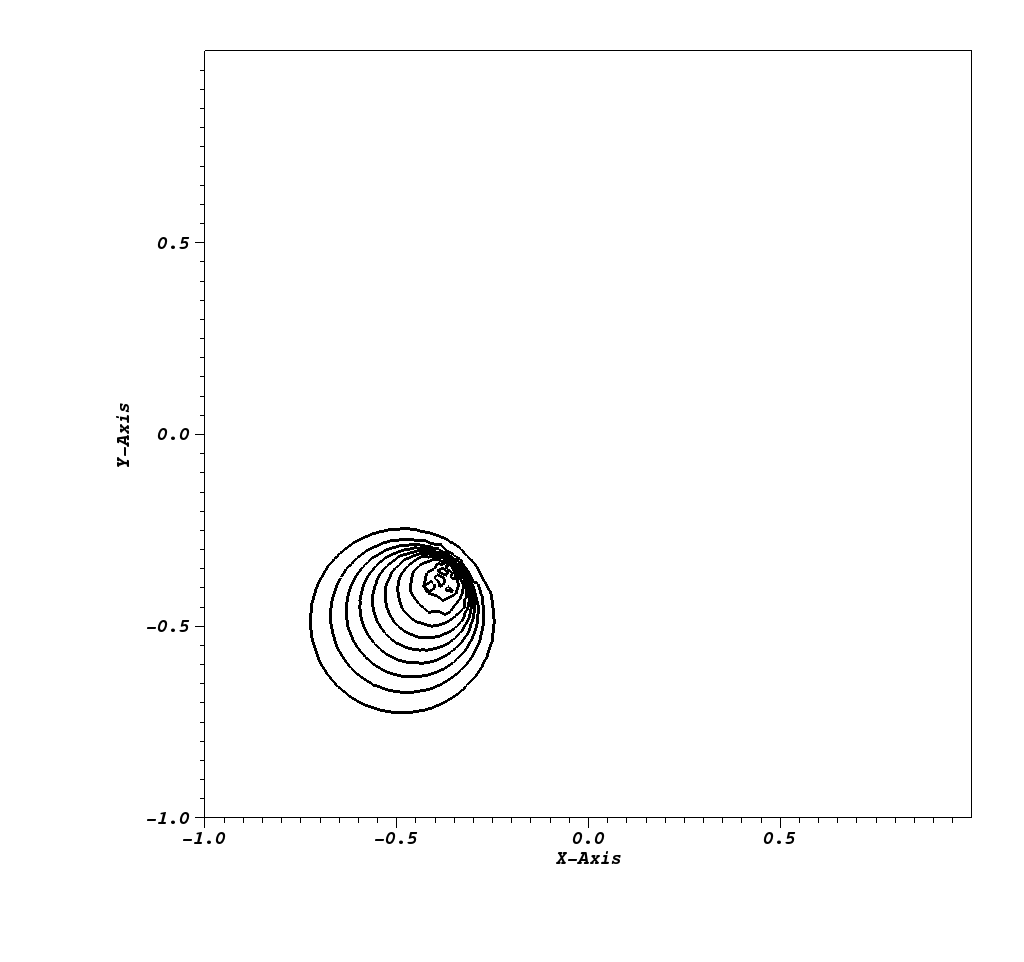}
    \caption{100 steps}
  \end{subfigure}%
     \begin{subfigure}[b]{0.3\textwidth}
    \includegraphics[width=\textwidth]{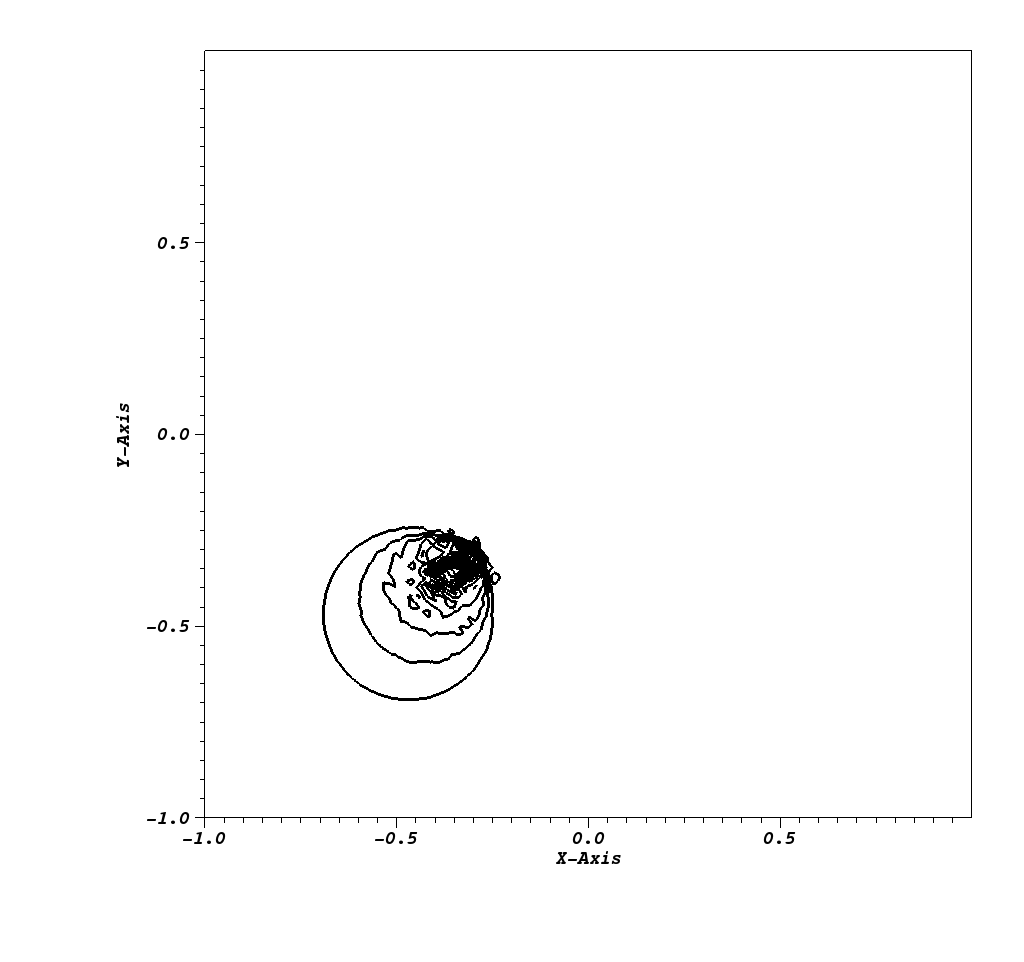}
    \caption{\label{fig:crash} 200 steps}
  \end{subfigure}%
       \begin{subfigure}[b]{0.3\textwidth}
    \includegraphics[width=\textwidth]{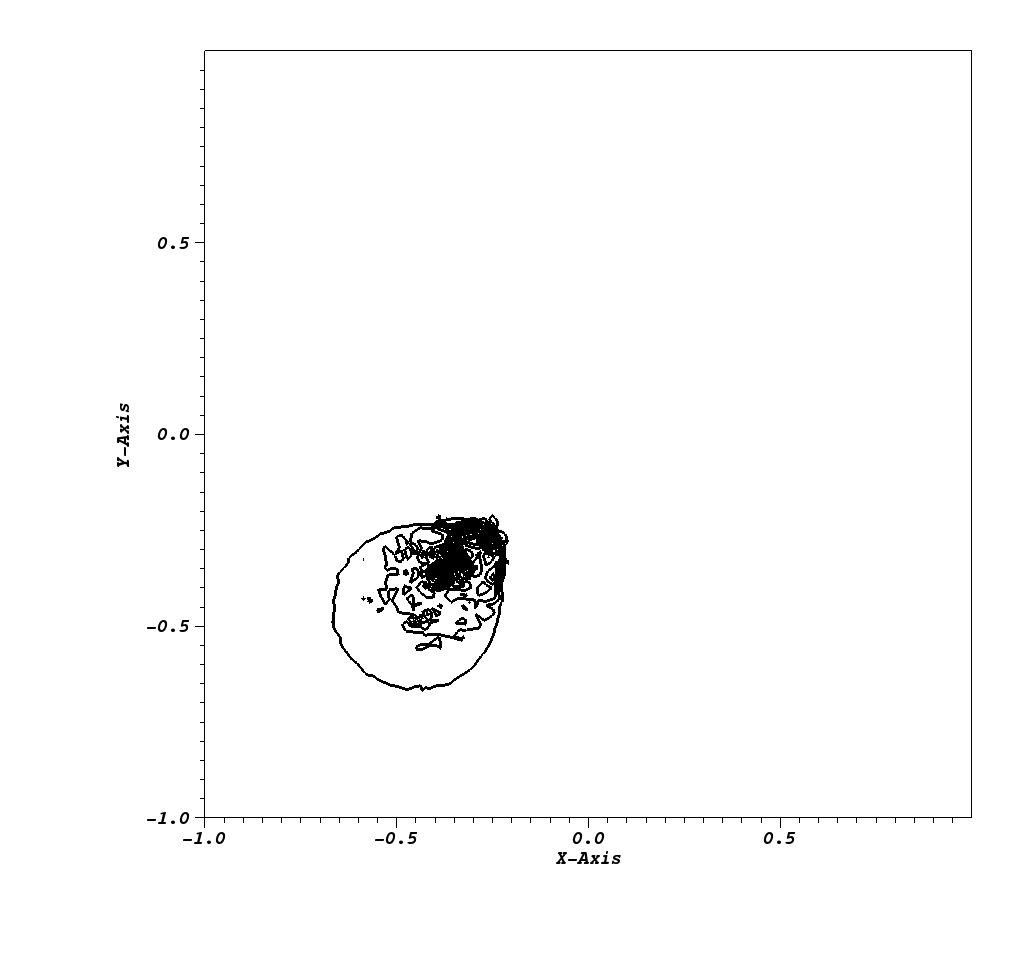}
    \caption{\label{fig:crash} 316 steps}
  \end{subfigure}%
     \caption{\label{burgers_3} 3-th order scheme in space and time}
 \end{figure}

   \begin{figure}[!htp]
 \centering 
   \begin{subfigure}[b]{0.3\textwidth}
    \includegraphics[width=\textwidth]{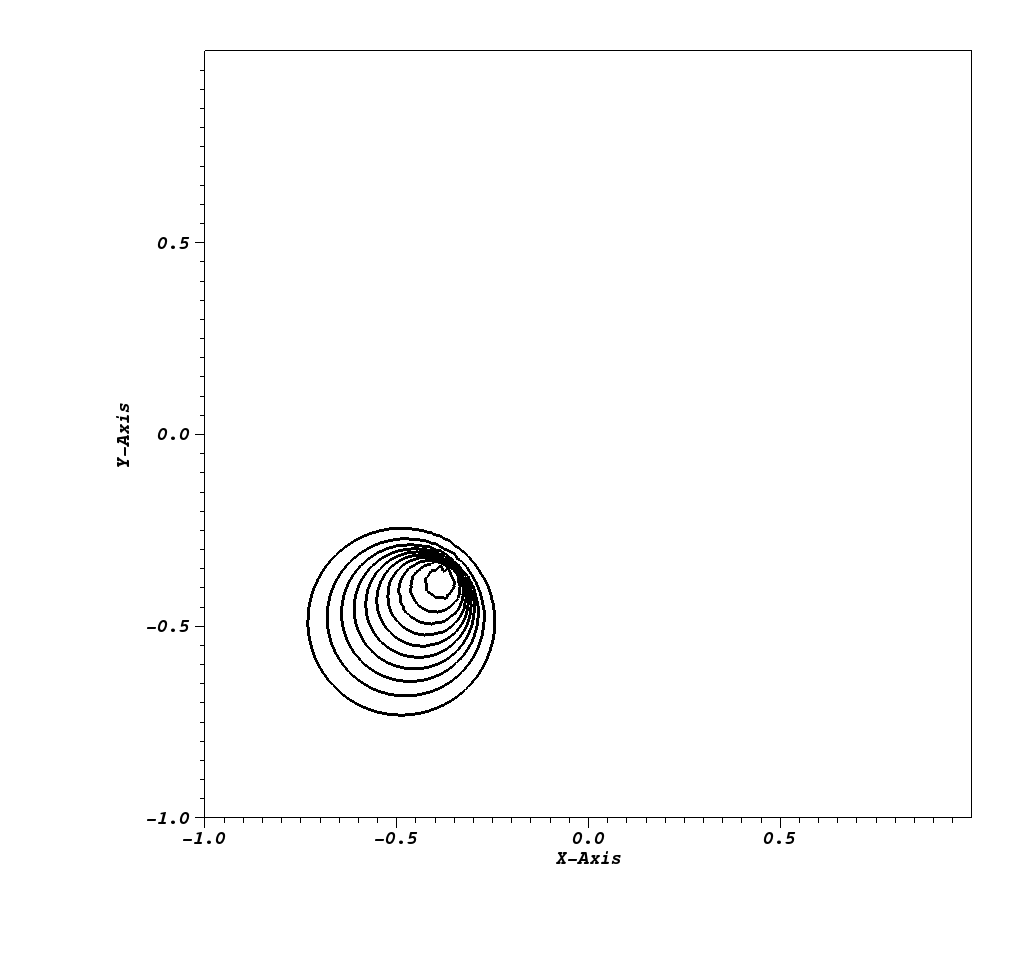}
    \caption{100 steps}
  \end{subfigure}%
     \begin{subfigure}[b]{0.3\textwidth}
    \includegraphics[width=\textwidth]{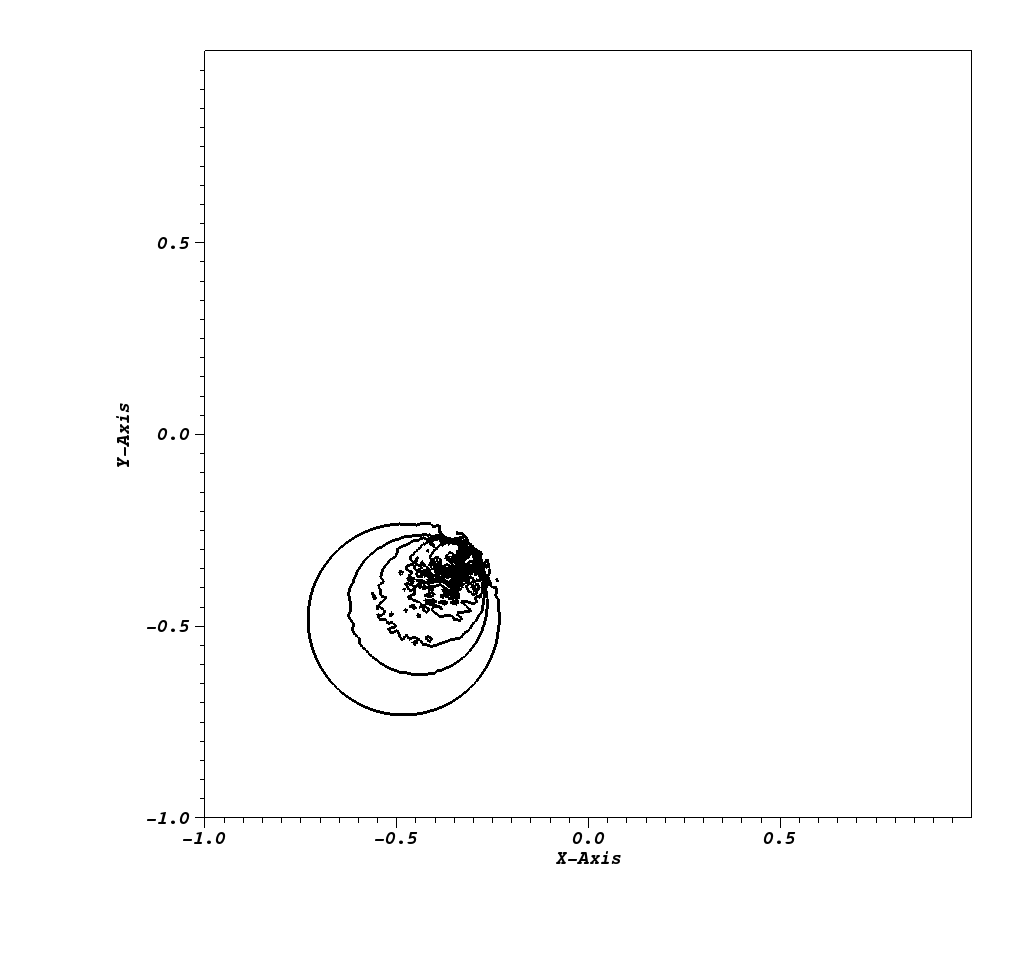}
    \caption{\label{fig:crash} 200 steps}
  \end{subfigure}%
       \begin{subfigure}[b]{0.3\textwidth}
    \includegraphics[width=\textwidth]{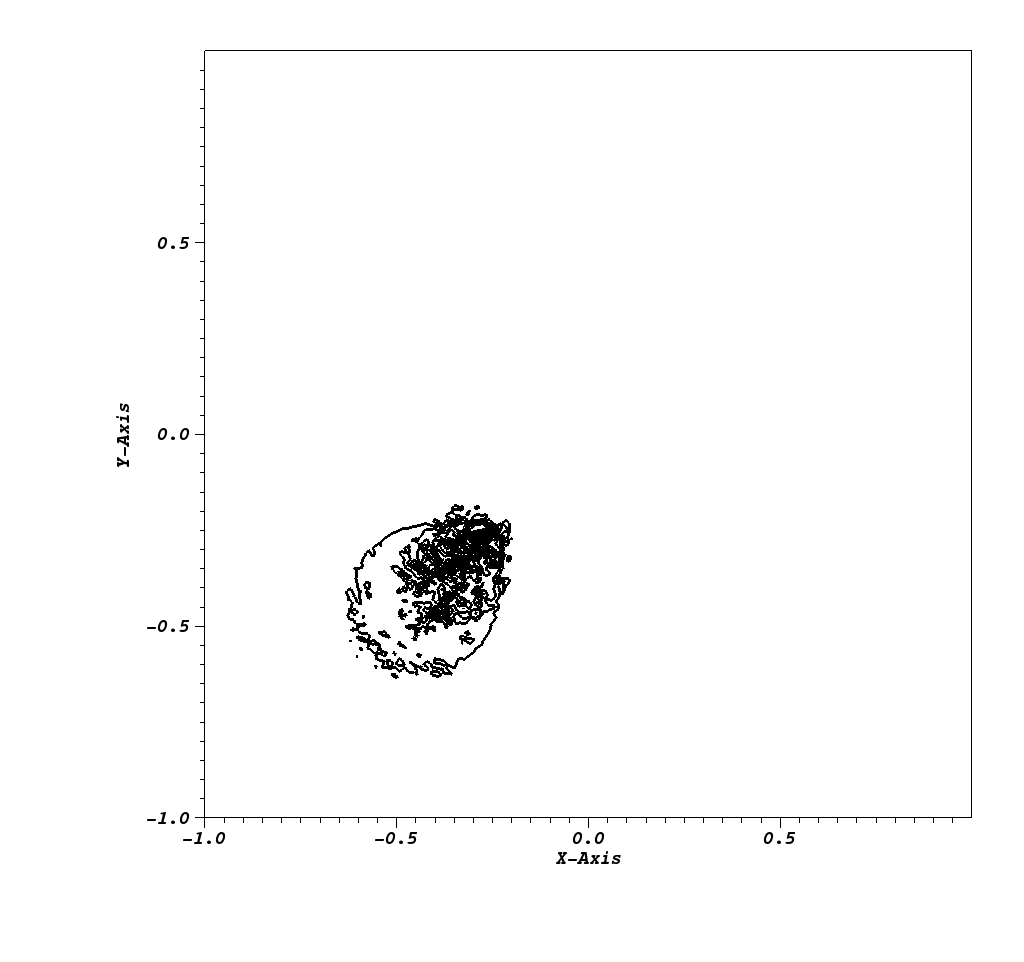}
    \caption{\label{fig:crash} 316 steps}
  \end{subfigure}%
     \caption{\label{burgers_4} 4-th order scheme in space and time}
 \end{figure}

Focusing now on the experiment, the CFL number is set to $0.1$ and in figures  \ref{burgers_2}-\ref{burgers_4}
the experiment is printed using several orders. In the smooth regime the 
fourth order scheme demonstrates the best behavior as it can be seen in figure \ref{burgers_4}
where accuracy is lost after the formation of the shock and oscillations can be seen. 
Also in the second and third order schemes \ref{burgers_2}-   \ref{burgers_3} 
these oscillations appear but not as strong as in the 4th order test case. To delete/handle these oscillations further techniques have to be applied 
but are not topic of these current study.  Next, we consider a Burgers' type of equation where 
the exact evaluation of the boundary operator  using a quadrature formula is not possible anymore.
We investigate the differences and also the change of the entropy. 

\subsection{Burgers' Type of Equation}

The problem is given by
\begin{equation}\label{eq:burgers_equation_like}
\begin{aligned}
      \partial_t u(t,x,y) +\partial_x(\cos u(t,x,y)) + \partial_y u(t,x,y)=0&,
      && (x,y) \in D, t\in (0, 0.2) \\
     u(0,x,y)=u_0(x,y)=\exp\bigl( -40(x^2+y^2) \bigr)&,&& (x,y)\in D,
\end{aligned}
\end{equation}
where $D$ is the unit disk in $\R^2$.
Again, outflow boundary conditions will be considered.
In this test case, a small bump located in zero will move up to the left and a shock
will appear after a finite time. 
First, we consider the total change of the entropy. 
As time integration we choose a SSPRK(3,3) with CFL number $0.1$ and
the space space discretisation is again done by a
pure continuous Galerkin scheme of third order with Bernstein polynomials.
In Table  \ref{entro_table}, the total change of entropy  using the
correction term and the  SAT procedure for the test using 
a grid with $952$ elements. Here, the operator is calculate 
 via a quadrature formula of order five and for non-negative values, it is set to zero again. 
 The calculations ends right before the shock is build. 
 We recognize the total amount is  negative and entropy stability is obtained.
If we increase the accuracy by using a finer grid, the change of the entropy will 
be tend to zero quite fast up to machine precision.

\begin{table}[!ht]
\centering
   \caption{Total entropy change of numerical solutions using a continuous Galerkin
   scheme for the Burgers' type of equation \eqref{eq:burgers_equation_like}.}
   \label{entro_table}
 \begin{tabular}{c|c}
\hline
    Time & Change of entropy \\
\hline
 $  0.0000000000000000 $     & $ 0.0000000000000000     $\\
 $  2.3975956746131837\cdot 10^{-2}$ & $-1.3689936006570046\cdot 10^{-3}$\\
 $  4.7358346029522956\cdot 10^{-2} $& $-1.8770004670986111\cdot 10^{-3}$\\
 $  7.0660969892906739\cdot 10^{-2} $& $-2.0664452188379426\cdot 10^{-3}$\\
 $  9.4077968305402895\cdot 10^{-2} $& $-2.1349903191985441\cdot 10^{-3}$\\
 $ 0.11740611797870178 $ &$     -2.1570788663033569\cdot 10^{-3}$\\
 $ 0.14081576189245676 $ &$     -2.1632360645712908\cdot 10^{-3}$\\
 $ 0.18836214773590210 $ &$     -2.1648782630169544\cdot 10^{-3}$\\
 $ 0.20000000000000001 $ & $      -2.1648948484478503\cdot 10^{-3}$\\
\end{tabular} 
\end{table}
Now, we compare the difference by applying a quadrature rule or exact integration to calculate
$\bbF$ for the estimation of the boundary operator. 

\subsubsection*{Exact Integration}
In Figure \ref{Burgers_type_exact},  a 4th order Galerkin scheme with correction 
term is used. The time integration is done via a strong stability preserving RK method
of fourth order using 5 stages. To estimate $\Pi$ we determine the operator $\bbF$ using
exact integration and apply for $\Pi=\min\{\bbF, 0 \}$. The results are presented in figure 
\ref{Burgers_type_exact}. 
   \begin{figure}[!htp]
 \centering 
   \begin{subfigure}[b]{0.3\textwidth}
    \includegraphics[width=\textwidth]{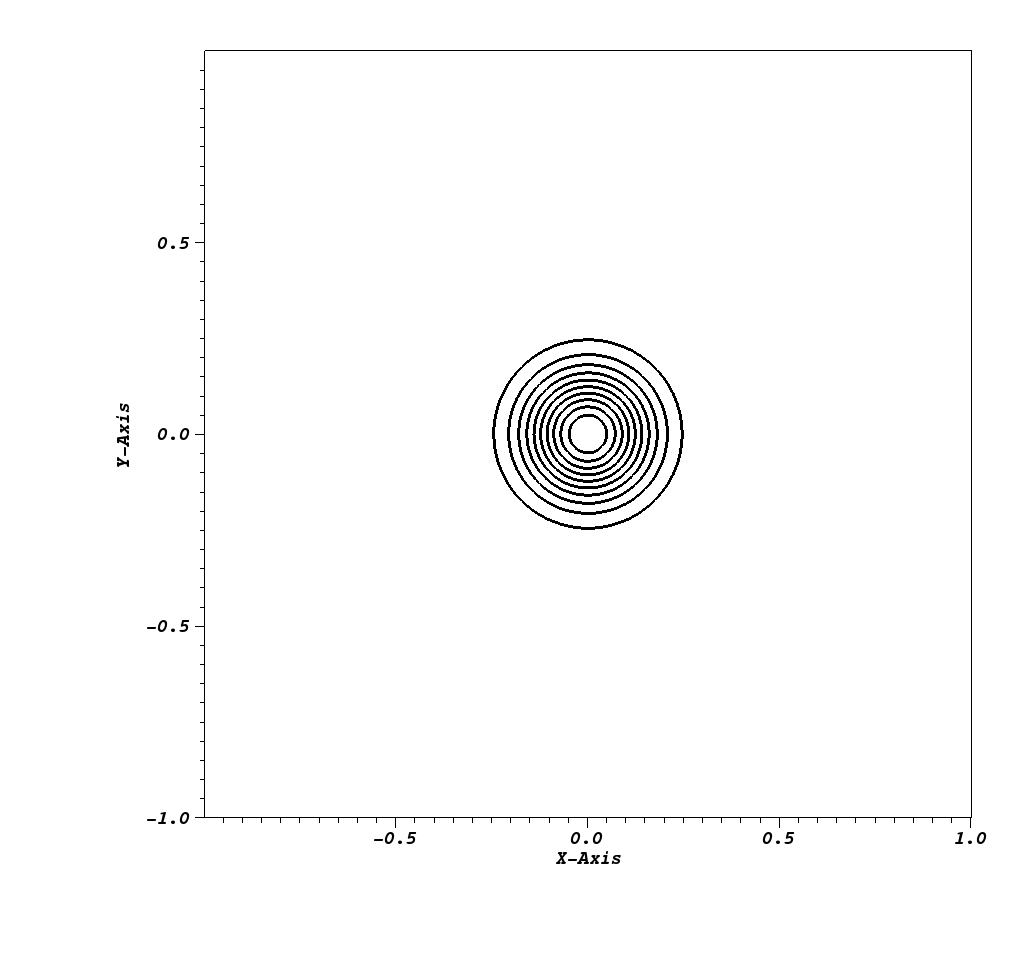}
    \caption{Initial Condition}
  \end{subfigure}%
     \begin{subfigure}[b]{0.3\textwidth}
    \includegraphics[width=\textwidth]{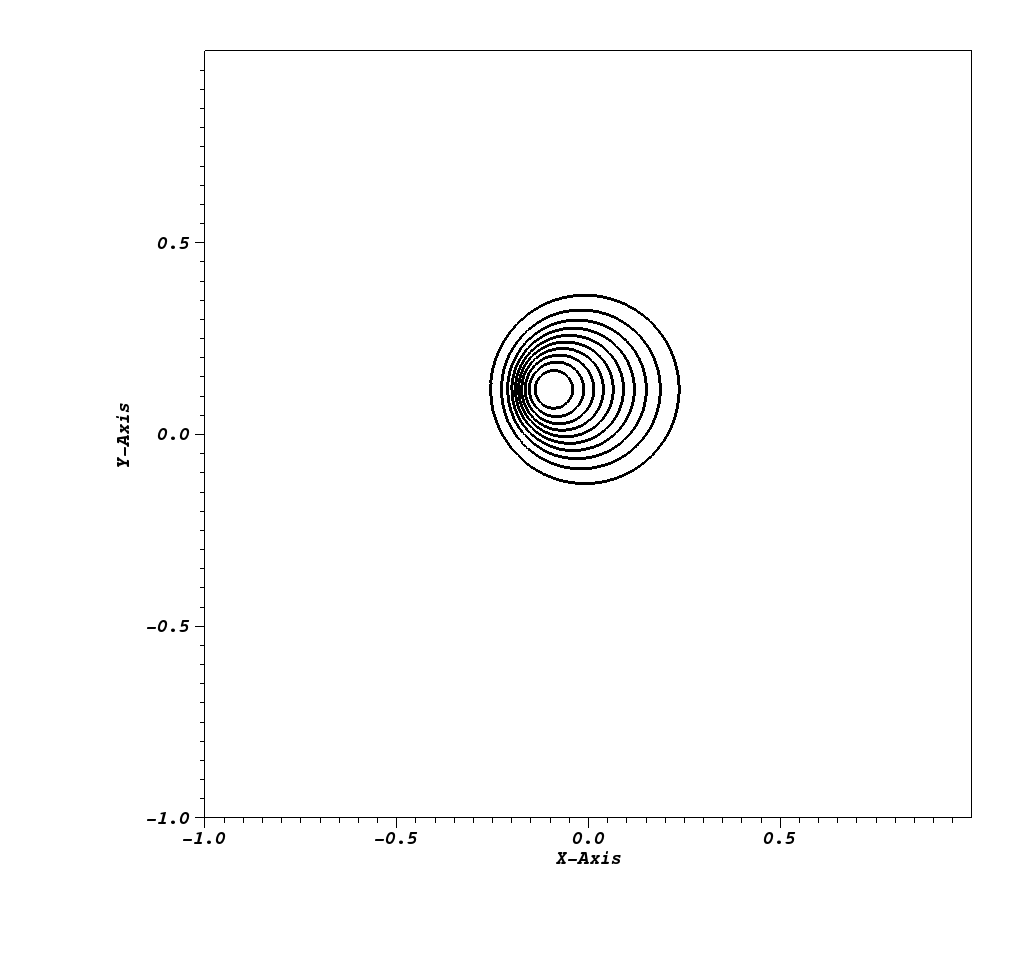}
    \caption{\label{fig:crash} 100 steps}
  \end{subfigure}%
       \begin{subfigure}[b]{0.3\textwidth}
    \includegraphics[width=\textwidth]{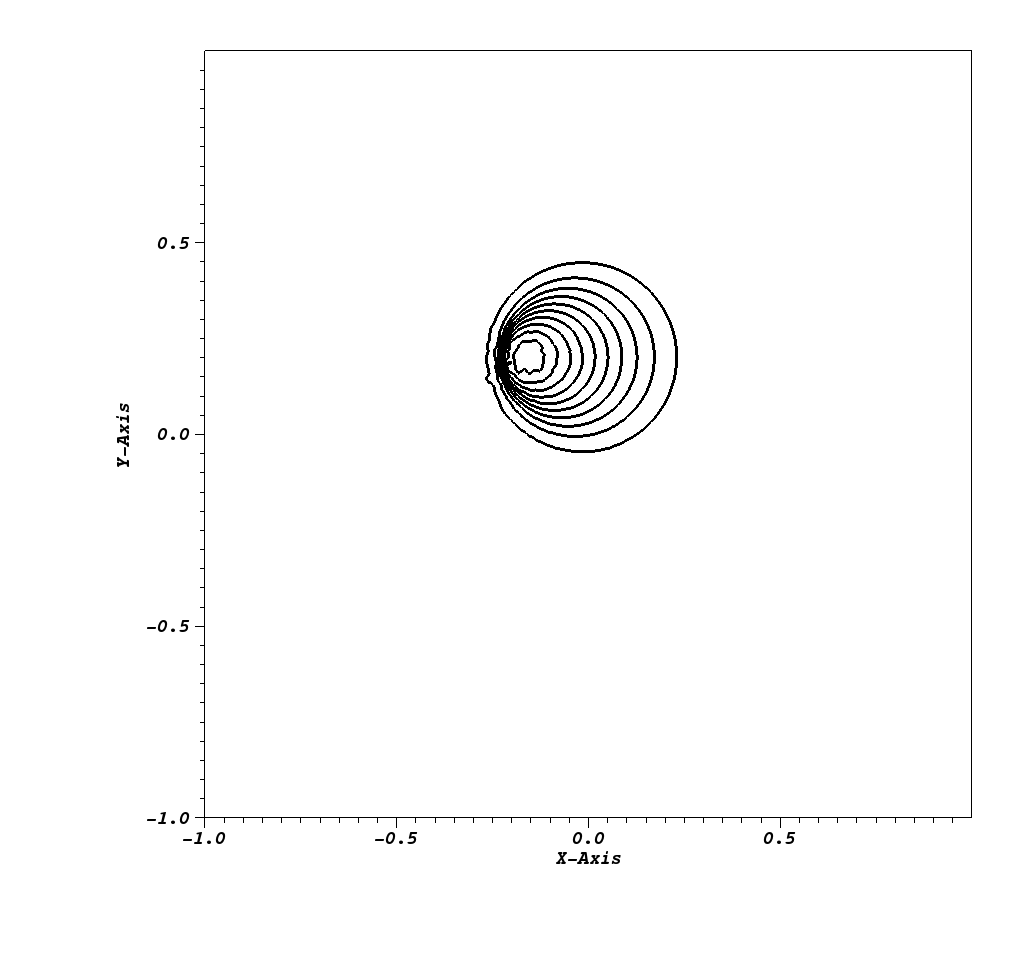}
    \caption{\label{fig:crash} 174 steps}
  \end{subfigure}%
     \caption{\label{Burgers_type_exact} 4-th order scheme in space and time}
 \end{figure}
 
\subsubsection*{Quadrature Formula}
Running the same experiments using a quadrature formula of order $N=5$ to evaluate $\bbF$
leads the results plotted in figure  \ref{Burgers_type_quadraturet} 
   \begin{figure}[!htp]
 \centering 
   \begin{subfigure}[b]{0.3\textwidth}
    \includegraphics[width=\textwidth]{Cos_B3_R13_Ref1_initial00000000.png}
    \caption{Initial Condition}
  \end{subfigure}%
     \begin{subfigure}[b]{0.3\textwidth}
    \includegraphics[width=\textwidth]{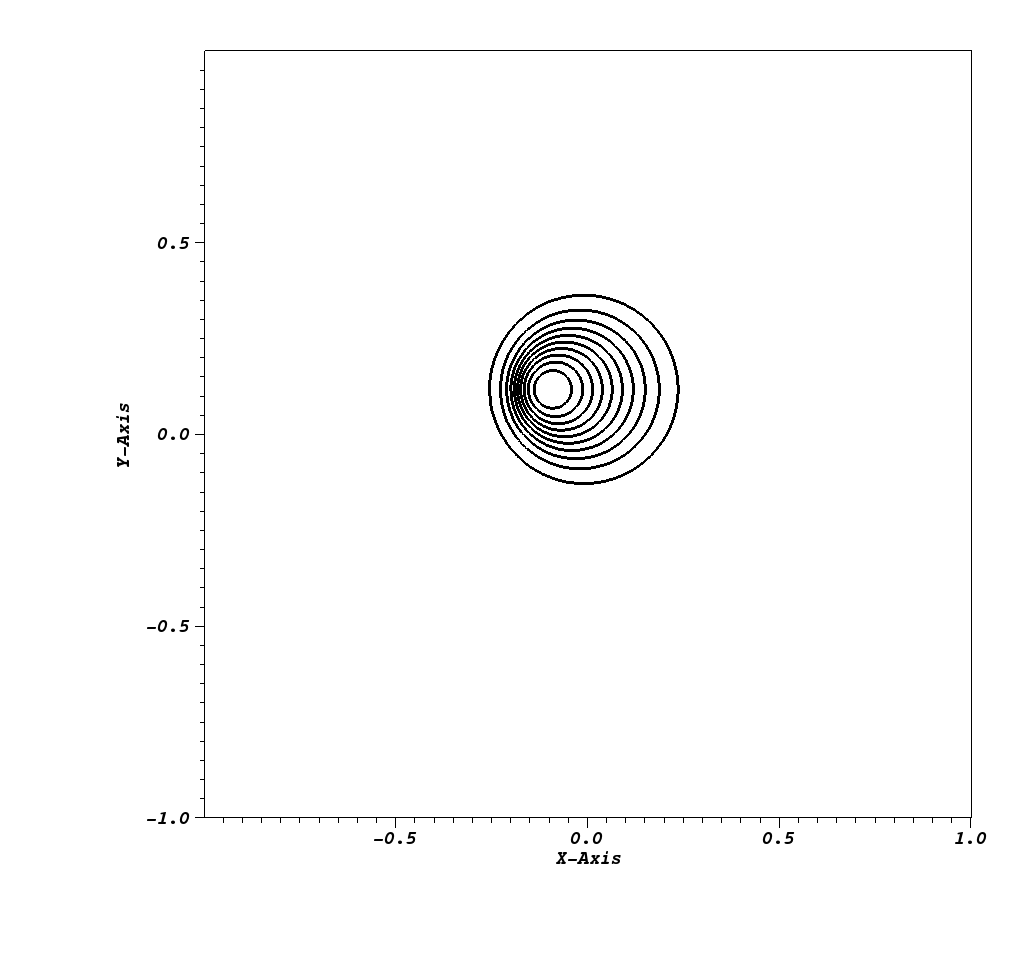}
    \caption{\label{fig:crash} 100 steps}
  \end{subfigure}%
       \begin{subfigure}[b]{0.3\textwidth}
    \includegraphics[width=\textwidth]{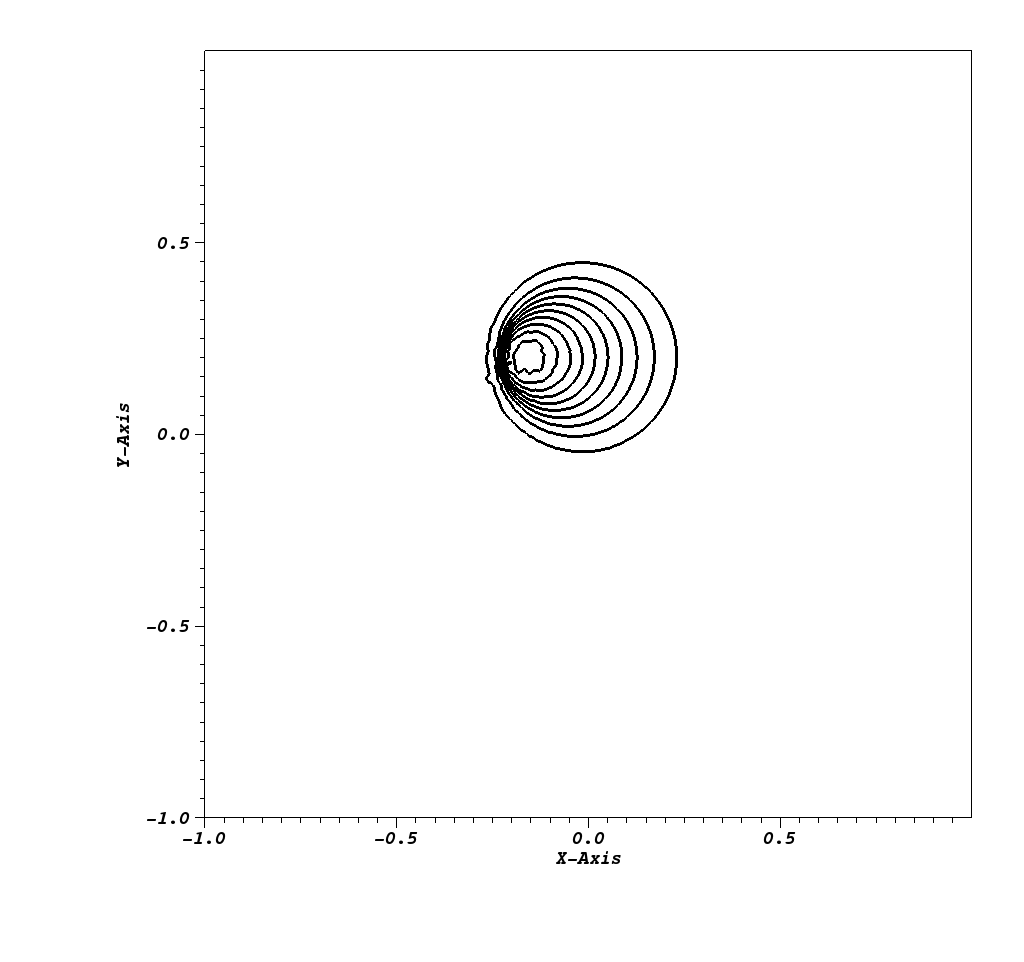}
    \caption{\label{fig:crash} 174 steps}
  \end{subfigure}%
     \caption{\label{Burgers_type_quadraturet} 4-th order scheme in space and time, CFL=0.1}
 \end{figure}
 Obviously, no difference can be seen and also the values are up to machine precision identically. 

\begin{remark}
We run this test with several different order for the Galerkin scheme 
(Lagrange/Bernstein basis) and also for the quadrature rule $(N=2-8)$ no major difference have be seen in the results. 
Therefore, we conclude that applying a quadrature formula to determine $\bbF$ have no effect on the accuracy and stability of the method. 
\end{remark}
All of our experiments verify that we obtain semi-discrete entropy stability for a pure Galerkin scheme 
if we apply the correction term together with our here derived boundary procedure. In the linear case, the entropy correction term 
is not needed but can be applied for stabilization reasons. However, our approach to estimate 
the boundary operator includes the linear cases used in the SBP-SAT framework  by setting the entropy variable $V=U$. Therefore, our approach can be seen as a natural extension to the linear case.

\section{Conclusion}

In this paper, we extend the investigation from \cite{abgrall2019analysis} 
and present a way to build entropy stable  Galerkin schemes.
By switching to the entropy variables, 
we generalize the SAT approach to nonlinear problems and develop new estimations for the boundary operators.
Our approach is in accordance with the boundary operators from the linear case, 
derived and used in the SBP-SAT community    (e.g. 
\cite{svard2014review, gassner2013skew, offner2019error} ).\\
By applying the boundary operator together with the correction term developed in \cite{abgrall2018general}
we are able to build semi-discrete entropy stable Galerkin schemes. 
Numerical experiments supports our theoretical analysis and demonstrate further that 
nearly no difference can be seen in the results if the boundary operator is developed using a quadrature rule or
exact integration. \\
Further research in this direction is the construction of fully entropy stable Galerkin schemes
using  SBP-SAT in time or relaxation Runge-Kutta methods.
 A more detailed analysis of the constructed boundary operator is also desirable. 
Finally, the influence of the entropy correction term for more advance problems 
should also be considered. In  \cite{ranocha2019reinterpretation}, 
a first study is already been done. However, this analysis has to be extended.

\section*{Acknowledgements}
P.\"O. has been funded by the the SNF grant (Number 200021\_175784) %\todo{Maybe more}
and by the UZH Postdoc grant.
This research was initiated by a first visit of JN at UZH,
and really started during ST postdoc at UZH. This postdoc was funded by an SNF grant 200021\_153604. The Los Alamos unlimited release number is LA-UR-19-32411.

\bibliographystyle{abbrv}
\bibliography{../literature}

\end{document}